\setlist[enumerate]{leftmargin=.5in}
\setlist[itemize]{leftmargin=.5in}
\Crefname{hypothesis}{Hypothesis}{Hypotheses}
\title{Adapting Zeroth Order Algorithms for Comparison-Based Optimization}
\author{Isha Slavin\thanks{New York University
  (\email{ivs225@nyu.edu})}}
\DeclareMathOperator{\argmin}{argmin}
\begin{document}

\maketitle

\begin{abstract}
Comparison-Based Optimization (CBO) is an optimization paradigm that assumes only very limited access to the objective function $f(x)$. Despite the growing relevance of CBO to real-world applications, this field has received little attention as compared to the adjacent field of Zeroth-Order Optimization (ZOO). In this work we propose a relatively simple method for converting ZOO algorithms to CBO algorithms, thus greatly enlarging the pool of known algorithms for CBO. Via PyCUTEst, we benchmarked these algorithms against a suite of unconstrained problems. We then used hyperparameter tuning to determine optimal values of the parameters of certain algorithms, and utilized visualization tools such as heat maps and line graphs for purposes of interpretation. All our code is available at \url{https://github.com/ishaslavin/Comparison_Based_Optimization}. \\

\end{abstract}

\section{Introduction}
\label{sec:intro}

Zeroth-Order Optimization (ZOO) is a branch of mathematical optimization in which one tries to minimize the objective function:
\begin{displaymath}
  f \colon \mathbb{R}^{n} \to \mathbb{R}.
\end{displaymath}

In this paradigm the gradient $\nabla f(x)$ cannot be accessed, and only function evaluations $f(x)$ are available. Comparison-Based Optimization (CBO), which is the focus of this paper, further restricts what one can calculate. For this form of optimization, we assume the user has very limited access to $f(x)$. More explicitly, it is assumed that the only method to obtain information about the function is to use a \emph{Comparison Oracle}, which---when given $x,y \in \mathbb{R}^n$---returns a single bit of information representing whether $f(x) < f(y)$ or $f(y) < f(x)$. More formally:

\begin{definition}[Comparison Oracle]
  A Comparison Oracle is a function $\mathcal{C}_{f}(\bullet , \bullet) \colon \mathbb{R}^{d} \times \mathbb{R}^{d} \to \{-1, +1\}$ defined as
  \begin{displaymath}
    \mathcal{C}_f(x,y) = \operatorname{sign} \left( f(y)-f(x) \right).
  \end{displaymath}
\end{definition}

In certain applications it is useful to consider an oracle which occasionally returns the incorrect answer. So, we also define a {\em noisy} comparison oracle:

\begin{definition}[Noisy Comparison Oracle]
  A Noisy Comparison Oracle with parameter $p \in [0.5,1]$ is a function $\mathcal{C}_{f}(\bullet , \bullet) \colon \mathbb{R}^{d} \times \mathbb{R}^{d} \to \{-1, +1\}$ defined as
  \begin{displaymath}
    \mathbb{P}\left[\mathcal{C}_f(x,y) = \operatorname{sign}\left( f(y)-f(x) \right)\right] = p.
  \end{displaymath}
  \label{def:Noisy_Oracle}
\end{definition}
There are many reasons to consider a comparison oracle and a variety of natural situations where CBO arises. Consider the case of applying reinforcement learning to real-world tasks, in which a real-valued reward seems erroneous to define \cite{christiano2017deep}. Additionally, in many real-world domains numerical feedback signals are either unavailable or are created arbitrarily to support conventional reinforcement learning algorithms \cite{furnkranz2012preference,wimmer2012generalization}. In these cases, human comparison feedback can be used in comparison oracle form in lieu of a numerical reward function. For example, when attempting to optimize exoskeleton gait researchers determined that since exoskeleton-walking is non-intuitive, users can provide preference between multiple gaits much more reliably than numerically quantifying experience \cite{tucker2020preference}. Another application for using CBO involves optimizing information retrieval systems through maximizing user utility \cite{yue2009interactively}. In this scenario it is infeasible to assign a numerical utility value to a result served to a user, yet simple to obtain judgements of utility by asking a user to compare two results; this is a form of comparison oracle. It is important to note that in such situations, CBO is the only option. Gradients or even function values are simply not available. \\

Despite the wealth of potential applications, relatively few algorithms for CBO have been proposed \cite{cheng2019query,cai2020one,karabag2021smooth}. On the other hand, there is a wealth of algorithms available for ZOO, see for example the recent survey \cite{larson2019derivative}. We observe that some, but not all, ZOO algorithms can be adapted to CBO. The first contribution of this paper is a simple criterion for determining when this is the case, and a procedure for doing so (See \Cref{sec:main}).  \\

There is also a lack of clear comparison between CBO algorithms in prior literature. In particular, it is not clear how such algorithms perform on large scale continuous optimization problems. This makes it difficult for practitioners wishing to use a CBO algorithm in practice to select the appropriate algorithm for their problem. As our second main contribution we provide a software suite for easy benchmarking of CBO algorithms, using the CuTEST set of test functions. We use this to compare five CBO algorithms---two native CBO methods and three that are converted from ZOO methods using the procedure mentioned above. \\

Finally, CBO algorithms often have many hyperparameters, and it is not always clear from theoretical grounds which hyperparameter settings are optimal. Using the software tools mentioned above, we search the hyperparameter space of two CBO algorithms and identify values which empirically work well on the CuTEST problem set. These could be used by practioners working on similar problems.\\

The paper is organized as follows. Our main contributions are available in \Cref{sec:main}, containing a novel utility we introduce. Pseudocode for our modifications of current algorithms are presented in \Cref{sec:main appendix} and experimental results are presented in \Cref{sec:experiments}. A discussion of those results can be found in \Cref{sec:discussion} and \Cref{sec:conclusions}.


\section{From ZOO to CBO}
\label{sec:main}

The motivation for this work was the observation that many ZOO algorithms {\em do not use the function values directly}. Rather, such algorithms proceed by sampling a small number of points $z_1,\ldots, z_m$ near the current iterate $x_k$ and then ranking the function evaluations $f(z_1),\ldots, f(z_m)$. The next iterate is then determined using this ranking. We note that such a ranking can be done using only a comparison oracle, and formalize this as \Cref{prop:Comparisons Only}.

\begin{property}[The ``Comparison Only'' property]
\label{prop:Comparisons Only}
Suppose $\mathcal{A}$ is a ZOO algorithm. We say $\mathcal{A}$ satisfies the ``Comparison Only'' property if it only uses function values within an $\argmin$ over a finite set, {\em e.g} $\argmin_{i=1,\ldots,m} f(z_i)$ or to sort a list $z_1,\ldots, z_m$ according to $f(z_i)$ {\em i.e.} to find a permutation $\pi$ such that $f(z_{\pi(i)}) \leq f(z_{\pi(2)}) \leq \ldots \leq f(z_{\pi(m)})$.
\end{property}

If $\mathcal{A}$ satisfies \Cref{prop:Comparisons Only}, it can easily be converted to a CBO algorithm using the utility \Cref{alg:CompSort} (an implementation of {\tt Bubble Sort} \cite[Section 2.3]{cormen2022introduction} using the comparison oracle) or \Cref{alg:CompMax} (an implementation of the {\tt Minimum} algorithm \cite[Section 9.1]{cormen2022introduction} using the comparison oracle). Observe that the above modification only holds when the zeroth order algorithm finds an $\argmin$ over a finite set; otherwise, \Cref{prop:Comparisons Only} will not hold.  We illustrate this with the Stochastic Three Point ({\tt STP}) method \cite{bergou2020stochastic}, see \Cref{alg:stp original_2}. For an example of a ZOO algorithm which does not satisfy our condition, and so cannot be converted into a CBO algorithm, consider the {\tt RSGF} algorithm of \cite{ghadimi2013stochastic} (see also \cite{nesterov2017random}). Here, at each iteration a gradient estimator is constructed from function evaluations and used in place of the gradient:

\begin{align}
    & \hat{g}_k  = \frac{f(x_k + \delta u_i) - f(x_k)}{\delta} \approx \nabla f(x_k) \\
    & x_{k+1} = x_k - \alpha\hat{g}_k
\end{align}
A second example could be any interpolation-based method, {\em e.g.} {\tt NEWUOA} \cite{powell2006newuoa}, or the recently introduced {\tt HJ-Mad} \cite{heaton2022global}. That {\tt CMA-ES} (and related algorithms) satisfy the Comparison Only property appears to be well-known in the evolutionary computing community, where it is frequently mentioned as a source of robustness \cite{gelly2007comparison}.

\begin{algorithm}
\caption{Comparison-based Sort ({\tt CompSort})}
\label{alg:CompSort}
\begin{algorithmic}
\STATE{\textbf{Initialization}}
\STATE{Take in Comparison Oracle$\colon \mathcal{C}_{f}$, lst $= [z_1,z_2,...,z_m]\colon$list of input values}
\FOR{$i = 1,\ldots,m-1$}
    \FOR{$j = 1,\ldots,m-i-2$}
        \IF{$\mathcal{C}_{f}\left(\text{lst}[j+1], \text{lst}[j]\right) = +1$} 
            \STATE{Swap lst[$j$] and lst[$j+1$]}
        \ENDIF
    \ENDFOR
\ENDFOR
\RETURN Sorted list $[z_{\pi(1)}, z_{\pi(2)}, \ldots, z_{\pi(m)}]$.
\end{algorithmic}
\end{algorithm}

\begin{algorithm}
\caption{Comparison-based Min ({\tt CompMin})}
\label{alg:CompMax}
\begin{algorithmic}
\STATE{\textbf{Initialization}}
\STATE{Take in Comparison Oracle$\colon \mathcal{C}_{f}$, lst $= [z_1,z_2,...,z_m]\colon$list of input values}
\STATE $z_{+} = z_1$
\FOR{$k=2,\ldots,m$}
    \IF{$\mathcal{C}_f(z_{+},z_k) = +1$}
        \STATE{$z_{+} = z_{+}$}  
    \ELSIF{$\mathcal{C}_f(z_{+},z_k) = -1$}
        \STATE{$z_+ = z_k$}
    \ELSE
        \STATE{$z_{+} = z_{+}$ or $z_{+} = z_{k}$ with equal probability.}
    \ENDIF
\ENDFOR
\RETURN $z_+$

\end{algorithmic}
\end{algorithm}

The two procedures introduced above are used to convert ZOO to CBO algorithms. Instead of using direct function evaluations to sort a list (\Cref{alg:CompSort}) or find the argmin (\Cref{alg:CompMax}), it queries a Comparison Oracle. It then uses the one-bit comparisons outputted by the oracle to either return a list of input vectors, arranged by function values in ascending order (\Cref{alg:CompSort}) or output the input which yields the smallest function value (\Cref{alg:CompMax}). This is done without ever evaluating the objective function at an input directly. \\

As proof of concept, we identified three ZOO algorithms satisfying \Cref{prop:Comparisons Only} and transform them to CBO algorithms using \Cref{alg:CompMax} or \Cref{alg:CompSort}. The algorithms we consider are  the Stochastic Three Points Method ({\tt STP}) \cite{bergou2020stochastic}, Covariance Matrix Adaptation Evolutionary Strategies ({\tt CMA-ES}) \cite{hansen2016cma}, and Gradientless Descent ({\tt GLD}) \cite{golovin2019gradientless}. For all algorithms considered, we have the functionality to generate multiple types of distributions $\mathcal{D}$ including the Uniform distribution over $\{e_1,\ldots, e_n\}$, where $e_i$ denotes the $i$-th canonical basis vector, Gaussian, Uniform over the unit sphere, and Rademacher (see \Cref{sec:main appendix}). Note that direct search methods such as the Nelder-Mead simplex algorithm \cite{Nelder:1965}, Powell's method \cite{1964Powell}, and various directional direct search algorithms \cite{larson2019derivative} also satisfy \Cref{prop:Comparisons Only} and can thus be converted to Comparison-Based algorithms. \\

\Cref{alg:stp original_2} shows how we were able to convert the STP optimization algorithm from Zeroth-Order to Comparison-Based. When using step $3a$ the algorithm uses function evaluations to determine the argmin over a set of three input vectors. When using a modification, step $3b$, \Cref{alg:CompMax} is employed to find the argmin, thus side-stepping function evaluations and using a Comparison Oracle instead. Note that the four distributions mentioned above can be used to randomly generate random vectors $s_k$ in the algorithm below. For CBO conversion of {\tt CMA-ES} and {\tt GLD} zeroth-order algorithms see \Cref{sec:main appendix}.

\begin{algorithm}[H]
\caption{Stochastic Three Point ({\tt STP}). For original algorithm use 3a. For comparison-based version, use 3b.}
\label{alg:stp original_2}
\begin{algorithmic}
\STATE{\textbf{Initialization}}
\STATE{Choose $x_0 \in \mathbb{R}^{n}$, stepsizes $\alpha_k > 0$, probability distribution $\mathcal{D}$ on $\mathbb{R}^{n}$}
\FOR{$k = 0,1,2,....$}
  \STATE{1. Generate a random vector $s_k \sim \mathcal{D}$}
  \STATE{2. Let $x_+ = x_k + \alpha_k s_k$ and $x_- = x_k - \alpha_k s_k$}
  \STATE{3a. $x_{k+1} = \argmin\{f(x_-), f(x_+), f(x_k)\}$}
  \STATE{3b. $x_{k+1} = {\tt CompMin}(x_{-}, x_{+}, x_k)$}
\ENDFOR
\end{algorithmic}
\end{algorithm}

Converting a ZOO algorithm to a CBO one using our utilities changes the {\em query complexity} of the algorithm, defined as the number of function evaluations (resp. comparison oracle queries) required to find a suitable solution, in a predictable way:

\begin{theorem}
Suppose $\mathcal{A}$ is a ZOO algorithm satisfying \Cref{prop:Comparisons Only} and making $m$ function evaluations per iteration. Then the associated CBO algorithm constructed using \Cref{alg:CompSort} (resp. \Cref{alg:CompMax}) makes at most $m^2$ (resp $m-1$) oracle queries per iteration.
\end{theorem}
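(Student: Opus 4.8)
The plan is to prove the two bounds separately by counting oracle invocations in \Cref{alg:CompMax} and \Cref{alg:CompSort}, and then transferring each count to $\mathcal{A}$ via \Cref{prop:Comparisons Only}.

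I would start with \Cref{alg:CompMax} (\texttt{CompMin}), the easier case. The routine runs a single \texttt{for} loop over $k = 2, \ldots, m$, and each pass through the loop body queries the comparison oracle $\mathcal{C}_f(z_+, z_k)$ once and branches on the returned value; no oracle call occurs outside the loop. Hence \texttt{CompMin} uses exactly $m - 1$ queries on an input list of length $m$. Because $\mathcal{A}$ satisfies \Cref{prop:Comparisons Only} and performs $m$ function evaluations per iteration, within each iteration it uses those values only to form an $\argmin$ over at most $m$ points; substituting \texttt{CompMin} for that $\argmin$ therefore costs at most $m-1$ oracle queries per iteration.

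Next I would analyze \Cref{alg:CompSort} (\texttt{CompSort}), which is \texttt{Bubble Sort} implemented with the oracle. Each pass through the body of the inner \texttt{for} loop performs exactly one oracle query --- the test $\mathcal{C}_f(\text{lst}[j+1], \text{lst}[j]) = +1$ --- and, as above, there are no oracle calls elsewhere. The number of queries is thus the total number of inner-loop passes, which is bounded by $\sum_{i=1}^{m-1}(m-i) = \frac{m(m-1)}{2} \le m^2$. Appealing again to \Cref{prop:Comparisons Only}, replacing the one sort that $\mathcal{A}$ performs on its $m$ sampled points by a call to \texttt{CompSort} costs at most $m^2$ oracle queries per iteration, as claimed.

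This argument is essentially bookkeeping and I do not anticipate a genuine obstacle. The points deserving a little care are: (i) the conditionals in \texttt{CompMin} and \texttt{CompSort} must be read as branching on a \emph{single} cached evaluation of $\mathcal{C}_f$ per loop pass, so that the number of loop passes equals the number of oracle queries; and (ii) if one allows an iteration of $\mathcal{A}$ to perform several separate $\argmin$/sort operations on batches of sizes $m_1, \ldots, m_r$ with $\sum_j m_j = m$, the stated bounds still hold, since $\sum_j (m_j - 1) = m - r \le m - 1$ and $\sum_j m_j^2 \le \big( \sum_j m_j \big)^2 = m^2$.
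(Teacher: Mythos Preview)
Your proposal is correct and follows essentially the same approach as the paper, which simply appeals to the standard complexity analysis of \texttt{Bubble Sort} and \texttt{Minimum} in \cite{cormen2022introduction}. Your explicit counts ($m-1$ and $\tfrac{m(m-1)}{2}\le m^2$) and the additional care in point~(ii) about multiple batches are in fact more detailed than what the paper provides.
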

\begin{proof}
This follows from standard complexity analysis of Bubble Sort and Minimization, see \cite{cormen2022introduction}.
\end{proof}

\begin{remark}
When $m$ is large,  using \Cref{alg:CompSort} increases the query complexity of an algorithm significantly, as it requires $m^2$ queries per iteration. This could be improved by using a more sophisticated sorting algorithm, {\em e.g.} QuickSort. Nonetheless, care must be taken when adapting ZOO algorithms requiring many sort operations. In particular, we caution against naive comparison-based implementations of the Nelder-Mead simplex algorithm \cite{nelder1965simplex}, as this may make as many as $\mathcal{O}(n^2)$ queries per iteration. 
\end{remark}
\section{A benchmarking utility for CBO algorithms}
\label{sec:Benchmark-Utility}
To the best of our knowledge, there does not exist a suite of test problems for benchmarking CBO algorithms. So, we create one using the well-known CuTEST package \cite{gould2015cutest} and the PyCUTEst interface \cite{fowkes2019PyCUTEst}. We do so by providing a simple wrapper which turns any test function $f$ into a comparison oracle $\mathcal{C}_f$; see \Cref{alg:comparison oracle utility}. We also provide a wrapper allowing for noisy comparison oracles; see \Cref{alg:noisy oracle}. These functions are available at \url{https://github.com/ishaslavin/Comparison_Based_Optimization}.  

\begin{algorithm}
\caption{Oracle Utility $(\mathcal{C}_f)$}
\label{alg:comparison oracle utility}
\begin{algorithmic}
\STATE{\textbf{Initialization}}
\STATE{Take in function $f \colon \mathbb{R}^{n} \to \mathbb{R}$, $x \colon$ first input value, $y \colon$ second input value}
\IF{$f(x) < f(y)$}
  \RETURN 1
\ELSIF{$f(y) < f(x)$}
  \RETURN -1
\ELSE
  \RETURN 0
\ENDIF
\end{algorithmic}
\end{algorithm}

\begin{algorithm}
\caption{Noisy Oracle Utility}
\label{alg:noisy oracle}
\begin{algorithmic}
\STATE{\textbf{Initialization}}
\STATE{Take in function $f \colon \mathbb{R}^{n} \to \mathbb{R}$, $p \in  [0, 1] \colon$ noisy-ness of oracle, $x \colon$ first input value, $y \colon$ second input value}
\STATE{Generate $r \in [0, 1]$ randomly}
\IF {$r < p$}
  \IF{$f(x) < f(y)$}
    \RETURN 1
  \ELSIF{$f(y) < f(x)$}
    \RETURN -1
  \ELSE
    \RETURN 0
  \ENDIF
\ELSIF{$r \geq p$}
  \IF{$f(x) < f(y)$}
    \RETURN -1
  \ELSIF{$f(y) < f(x)$}
    \RETURN 1
  \ELSE
    \RETURN 0
  \ENDIF
\ENDIF
\end{algorithmic}
\end{algorithm}

\section{Experimental results}
\label{sec:experiments}
We empirically compare five CBO algorithms. Two are specifically designed for CBO problems ({\tt SCOBO} \cite{cai2020one} and {\tt SignOPT} \cite{cheng2019query}) while three ({\tt GLD}, {\tt CMA-ES}, and {\tt STP}) are adapted from ZOO algorithms using the method of \Cref{sec:main}. See \Cref{sec:main} for further details, and \Cref{sec:main appendix} for pseudocode. We first benchmark these algorithms on three simple test problems: Sparse Quadratic, MaxK, and (non-sparse) Quadratic, studied in \cite{cai2020one,cai2021zeroth}.

\begin{definition}[SparseQuadratic]
For fixed parameters $n=200$ and $k=20$, define
\begin{align}
    & f: \mathbb{R}^{n} \to \mathbb{R} \\
    & f(x) = \sum_{i=1}^k x_i^2
\end{align}
\label{def:SparseQuadratic}
\end{definition}

\begin{definition}[MaxK]
  Fix the parameters $n=200$ and $k=20$. For any $x \in \mathbb{R}^n$, let $\pi$ denote a permutation such that $x_{\pi(1)} \geq x_{\pi(2)} \geq \dots \geq .$ Define:
  \begin{align}
    & f_{\mathrm{max-k}} \colon \mathbb{R}^{n} \to \mathbb{R} \\
    & f_{\mathrm{max-k}}(x) = \sum_{i=1}^{k}{x^2}_{\pi(i)}
  \end{align}

\end{definition}

By non-sparse quadratic we mean the function
\begin{definition}[NonSparseQuadratic]
\begin{align}
    & f: \mathbb{R}^{200} \to \mathbb{R} \\
    & f(x) = \sum_{i=1}^{200}x_i^2
\end{align}
\label{def:NonSparseQuadratic}
\end{definition}

\begin{figure}[H]
  \centering
  \label{fig:ToyProblemBenchmarking}
  \includegraphics[width=0.32\textwidth]{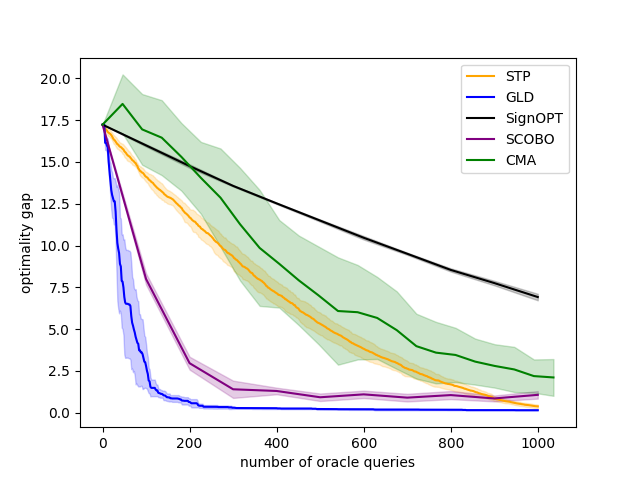}
  \hfill
  \includegraphics[width=0.32\textwidth]{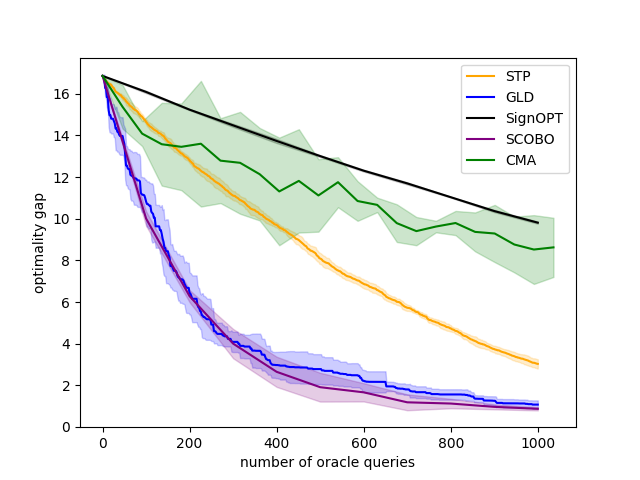}
  \hfill
  \includegraphics[width=0.32\textwidth]{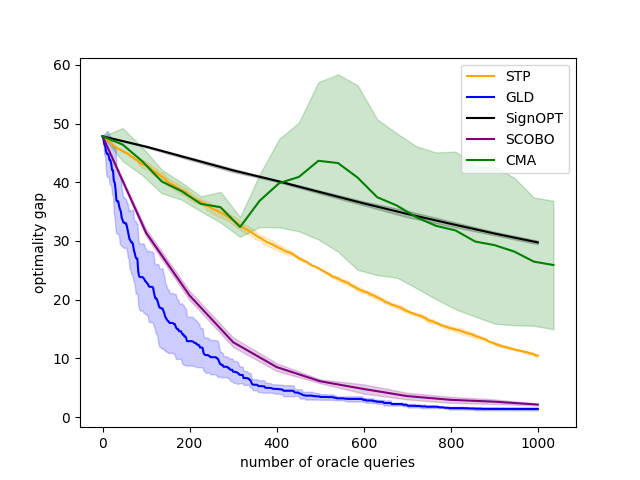}
  \caption{{\bf Left:} SparseQuadratic. {\bf Center:} MaxK. {\bf Right:} NonSparseQuadratic. Graphs display the mean optimality gap plotted against the cumulative number of comparison oracle queries for CBO algorithms against the three functions mentioned above. }
\end{figure}

We test our five algorithms on these three functions. For each problem, we run each algorithm using the same initial point $x_0$ and repeat this five times. \Cref{fig:ToyProblemBenchmarking} shows the mean optimality gap ({\em i.e.} $f(x_k) - f(x_{\star})$) plotted against the cumulative number of comparison oracle queries. The shading indicates the min--max range. As expected \cite{cai2020one} {\tt SCOBO} optimizes the fastest on the function MaxK which exhibits gradient sparsity, {\em i.e.}
\begin{equation}
    \left\|\nabla f(x)\right\|_{0} := \left|\{i: \nabla_if(x) \neq 0\}\right| \leq 20 \text{ for all } x \in \mathbb{R}^n.
\end{equation}
For the function without gradient sparsity (the non-sparse quadratic) as well as SparseQuadratic, {\tt GLD} (originally zeroth-order but modified here to be comparison-based) performs best. {\tt STP}, {\tt SignOPT}, and {\tt CMA} show similar patterns of minimizing the functions linearly and not fast.

\subsection{PyCUTEst Results}
The functions mentioned above are fairly simple. To benchmark against problems that generalize to an overall population of functions we utilized the PyCUTEst  \cite{fowkes2019PyCUTEst} Python wrapper to the Fortran package CUTEst \cite{gould2015cutest}, used to test optimization software. Available in this package are 117 unconstrained problems, each varying in input vector dimension. We benchmarked our CBO algorithms against 22 of these problems, ranging in input vector dimension from $\mathbb{R}^{10}$ to $\mathbb{R}^{100}$. The 22 PyCUTEst functions used and their dimensions are provided in \Cref{tab:PyCUTEst_problems_dimensions}. To turn these functions into CBO problems we used the utility described in \Cref{sec:Benchmark-Utility}. Representative results are shown in \Cref{fig:PyCUTEst_Results}. To gain a clearer perspective on how these CBO algorithms compare over the entire benchmark set, we use performance profiles \cite{dolan2002benchmarking}. As described in \cite{kim2021curvature}, performance profiles are constructed as follows: \\

Let $\mathcal{P}$ denote the set of benchmark problems and $\mathcal{S}$ denote the set of algorithms under consideration. For each $p \in \mathcal{P}$ and $s \in \mathcal{S}$ the {\em performance ratio} $r_{p,s}$ is defined by 
\begin{equation*}
    r_{p,s} = \frac{t_{p,s}}{\min_{s'\in\mathcal{S}} t_{p,s'}},
\end{equation*}
where $t_{p,s}$ is the number of comparison oracle queries required for algorithm $s$ to solve problem $p$ (lower is better). So, $r_{p,s}$ represents the performance of $s$ on $p$ relative to the best algorithm in $\mathcal{S}$ for $p$. The {\em performance profile} of $s$, $\rho_{s} : [1,\infty) \rightarrow [0,1]$ is
\begin{align*}
    \rho_s(\tau) = \frac{|\{p \in \mathcal{P} : r_{p,s} \leq \tau \}|}{|\mathcal{P}|}.
\end{align*}
In other words, $\rho_s(1)$ is the fraction of problems for which $s$ solves the problem first, so a higher value of $\rho_s(1)$ is better. When $\tau$ is larger $\rho_s(\tau)$ represents the fraction of problems for which the performance of algorithm $s$ is at most $\tau$ times worse than the performance of the best algorithm tested on this problem. Again, higher values of $\rho_s(\tau)$ are preferred and indicate that the algorithm $s$ is {\em robust}. Our success condition for performance profiling is determined by the {\em relative} size of either the function values or the gradient. More specifically, we define two success criterion terms to profile against: one in which the final function evaluation $f(x_k)$ is 0.05 times the initial function evaluation $f(x_0)$, and one in which the euclidean norm of the gradient of the function evaluated at $x_k$ is 0.05 times the 2-norm of the gradient of the function evaluated at the starting input $x_0$. Performance profiles for {\tt SignOPT}, {\tt GLD}, {\tt CMA-ES}, {\tt SCOBO}, and {\tt STP}, tested on the 22 problems in \Cref{tab:PyCUTEst_problems_dimensions}, are shown in \Cref{fig:Performance_Profiles}. \\

\begin{table}[H]
\footnotesize
\caption{PyCUTEst problems used in benchmarking.}
\label{tab:PyCUTEst_problems_dimensions}
\begin{center}
  \begin{tabular}{|c|c|c|c|c|} \hline
   Problem & \bf CHNROSNB & \bf CHNRSNBM & \bf ERRINROS & \bf ERRINRSM \\ \hline
   Dimension & 50 & 50 & 50 & 50 \\ \hline
   Problem & \bf HILBERTB & \bf QING & \bf LUKSAN11LS & \bf LUKSAN12LS \\ \hline
   Dimension & 10 & 100 & 100 & 98 \\ \hline
   Problem & \bf LUKSAN13LS & \bf LUKSAN14LS & \bf LUKSAN15LS & \bf LUKSAN16LS \\ \hline
   Dimension & 98 & 98 & 100 & 100 \\ \hline
   Problem & \bf LUKSAN17LS & \bf LUKSAN21LS & \bf LUKSAN22LS & \bf MANCINO \\ \hline
   Dimension & 100 & 100 & 100 & 100 \\ \hline
   Problem & \bf STRTCHDV & \bf SENSORS & \bf VANDANMSLS & \bf WATSON \\ \hline
   Dimension & 10 & 100 & 22 & 12 \\ \hline
   Problem & \bf TRIGON1 & \bf TRIGON2 & & \\ \hline
   Dimension & 10 & 10 & & \\ \hline
  \end{tabular}
\end{center}
\end{table}

\begin{figure}
  \centering
  \label{fig:PyCUTEst_Results}
  \includegraphics[width=0.32\textwidth]{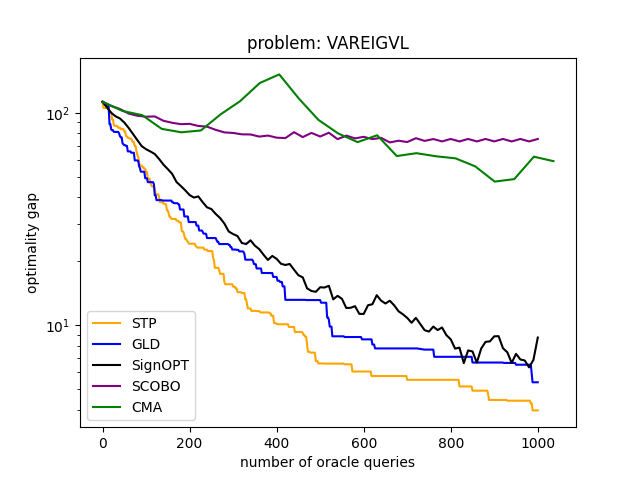}
  \hfill
  \includegraphics[width=0.32\textwidth]{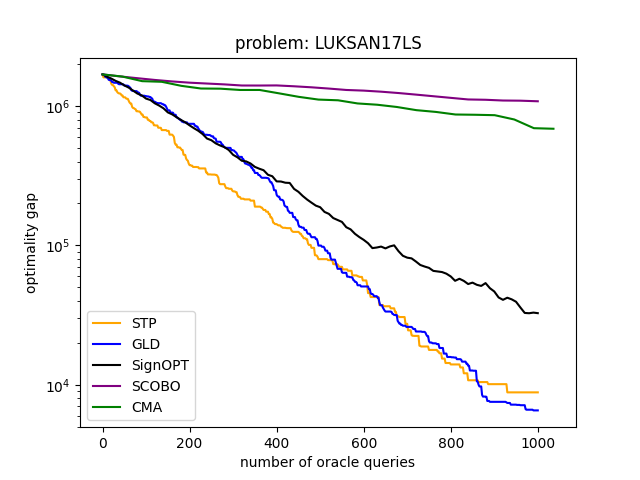}
  \hfill
  \includegraphics[width=0.32\textwidth]{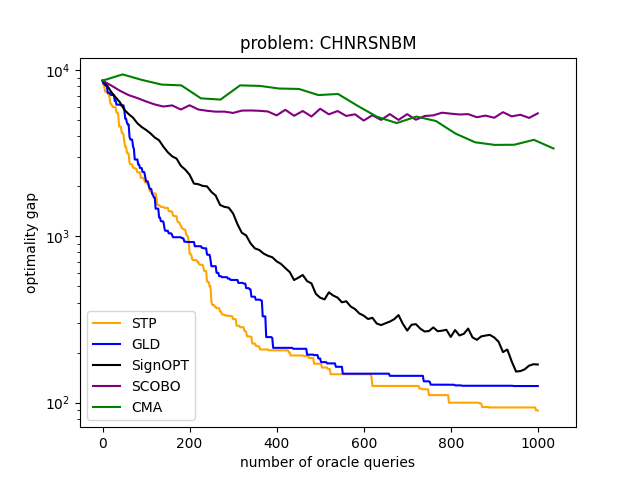}
  \caption{Optimality gap {\em ($f(x_k) - f(x_{\star})$)} {\em vs} number of iterations for three typical PyCUTEst functions. {\bf Left:} VAREIGVL. {\bf Center:} LUKSAN17LS. {\bf Right:} CHNRSNBM.}
\end{figure}

\begin{figure}
    \centering
    \includegraphics[width=0.45\textwidth]{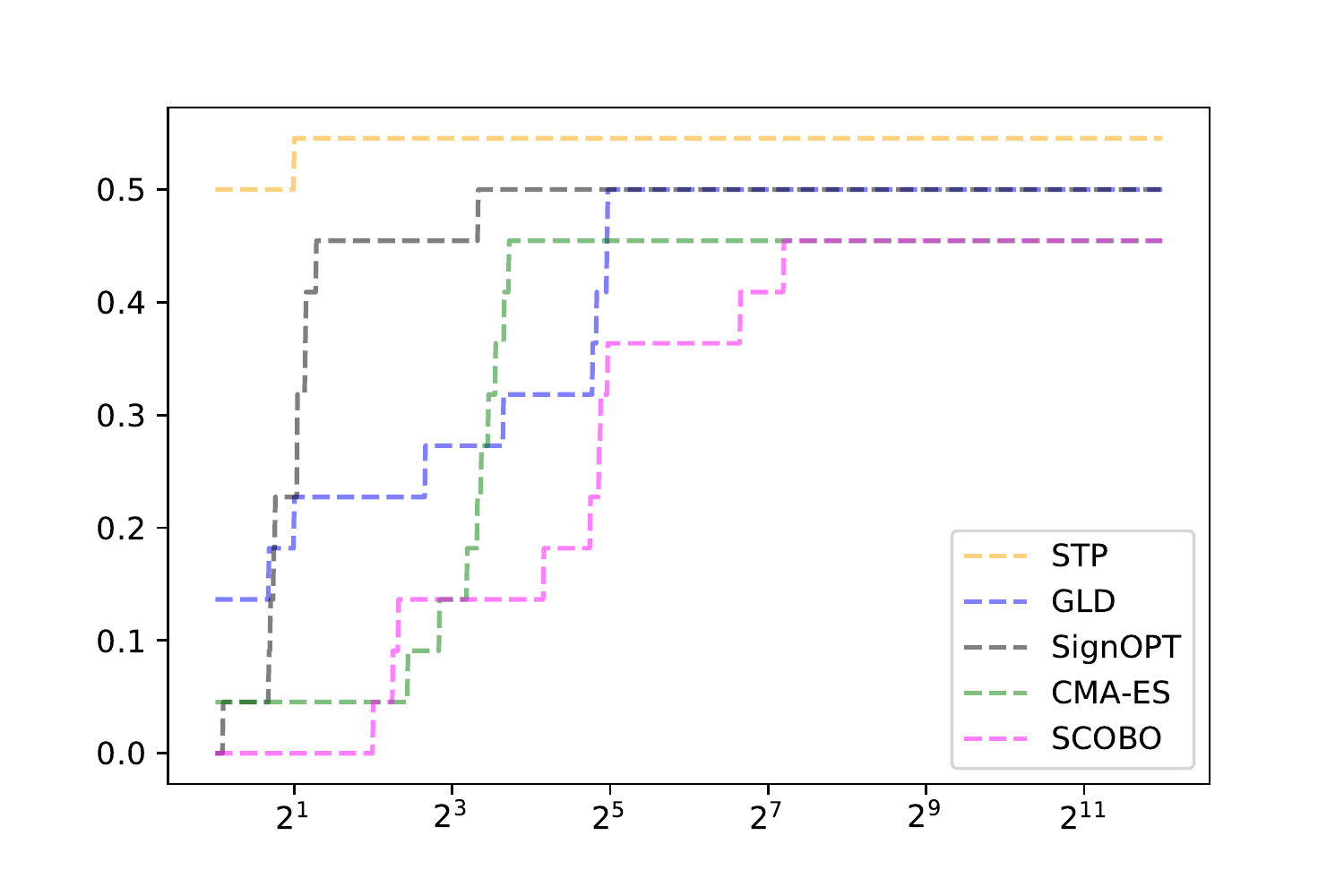}\hfill
     \includegraphics[width=0.45\textwidth]{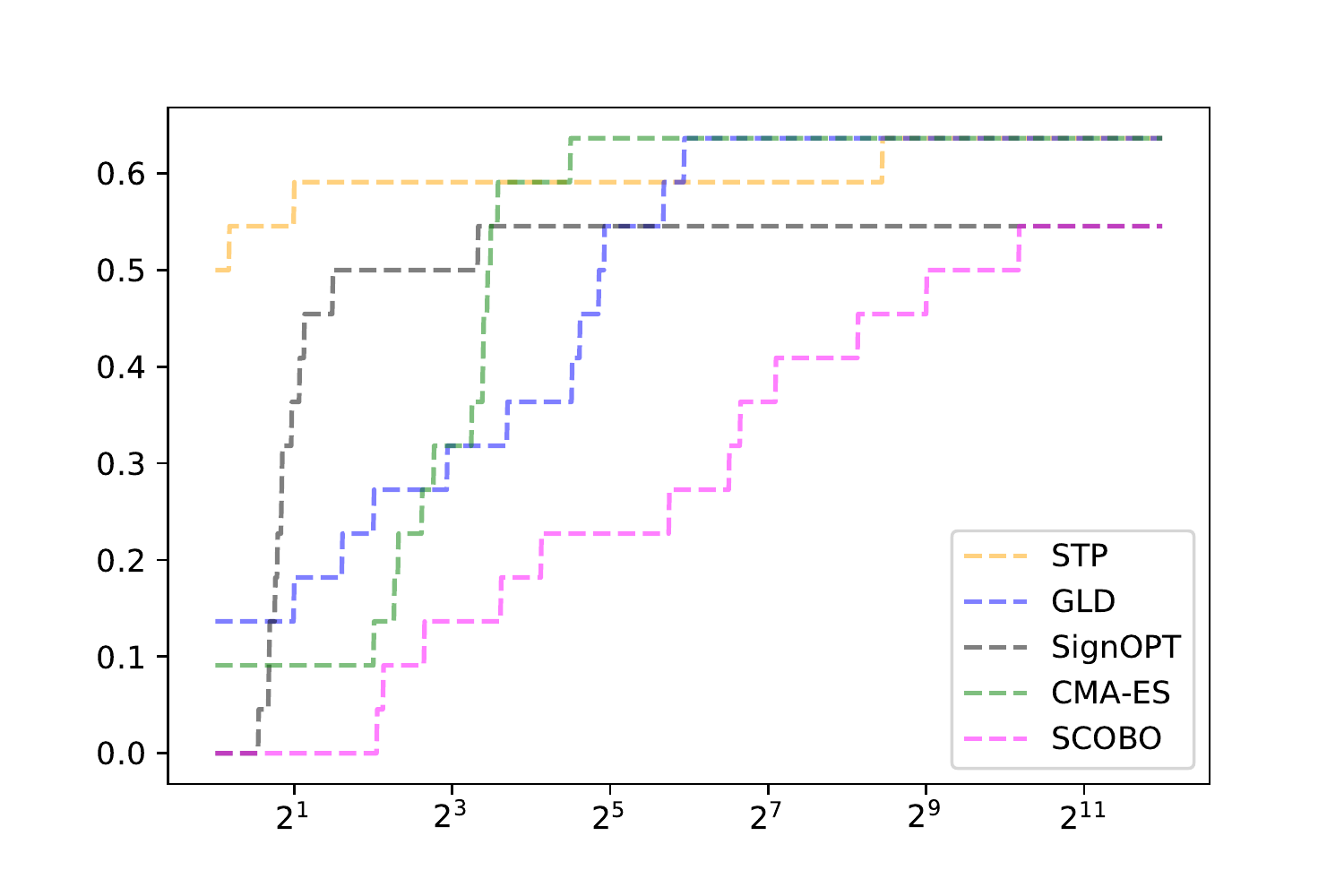} \\
     \includegraphics[width=0.45\textwidth]{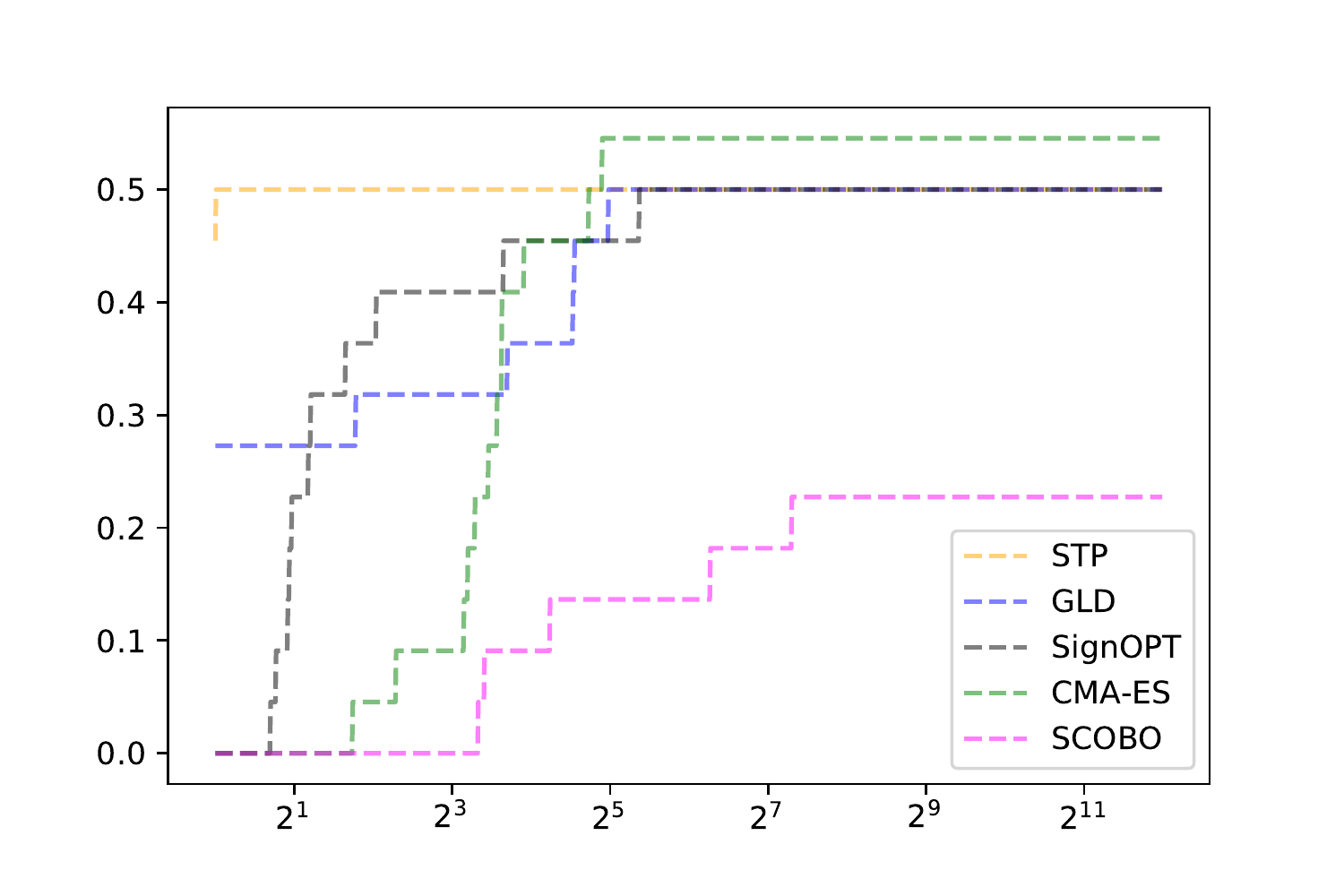}\hfill
     \includegraphics[width=0.45\textwidth]{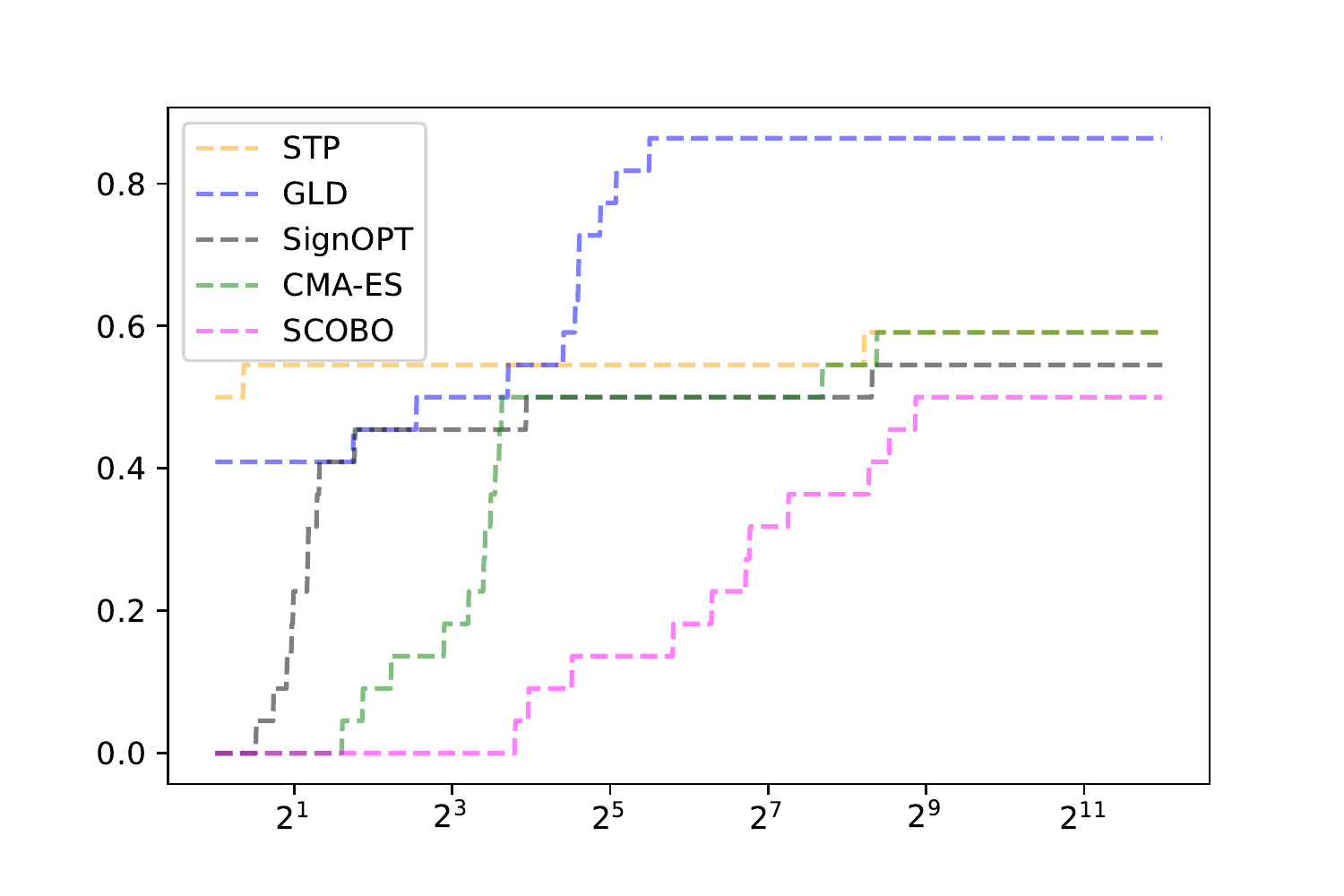} \\
    \caption{Performance profiles for {\tt GLD}, {\tt SignOPT}, {\tt STP}, {\tt CMA-ES}, and {\tt SCOBO}. {\bf Top Left:} A query budget of $10^4$ and success criterion $f(x_k)\leq 0.05f(x_0)$. {\bf Top Right:} A query budget of $10^5$ and success criterion $f(x_k)\leq 0.05f(x_0)$. {\bf Bottom Left:} A query budget of $10^4$ and success criterion $\|\nabla f(x_k)\|_2 \leq 0.05\|\nabla f(x_0)\|_2$. {\bf Bottom Right:} A query budget of $10^5$ and success criterion $\|\nabla f(x_k)\|_2 \leq 0.05\|\nabla f(x_0)\|_2$.}
    \label{fig:Performance_Profiles}
\end{figure}

\subsection{Noisy Oracle} 
\label{sec:noisy oracle subsection}

In practice comparison oracles may occasionally be unreliable. To test the robustness of the CBO algorithms to noise, we ran experiments using the SparseQuadratic function (see \Cref{def:SparseQuadratic}) with the noisy comparison oracle utility (see \Cref{def:Noisy_Oracle}). The noisy oracle takes in a parameter $p$ determining the probability that its output is accurate. We ran experiments using $p=0.7$ and $p=0.9$. The results are shown in \Cref{fig:Noisy_Oracle_Results}. Notice that {\tt SCOBO} and {\tt GLD} show similar trends, while {\tt STP}, {\tt SignOPT}, and {\tt CMA} show similarities as well with a greater margin of error. Additionally, we can see that {\tt CMA} and {\tt GLD} show low robustness to noise with greater margins of error and a clear struggle to minimize the function, whereas {\tt SCOBO} maintains performance in the presence of noise well.

\begin{figure}
  \centering
  \label{fig:Noisy_Oracle_Results}
  \includegraphics[width=0.49\textwidth]{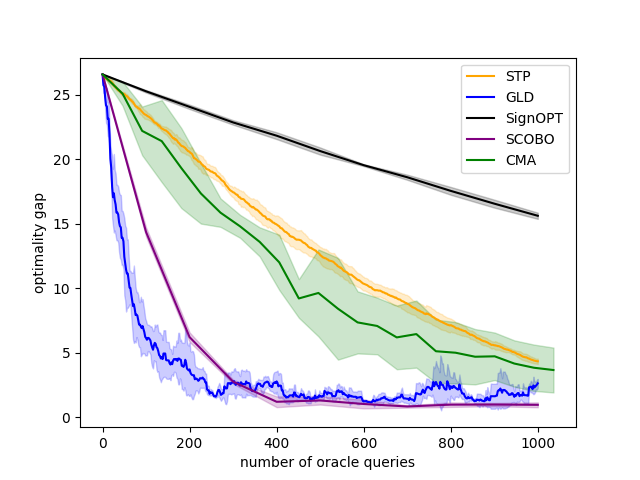}
  \hfill
  \includegraphics[width=0.49\textwidth]{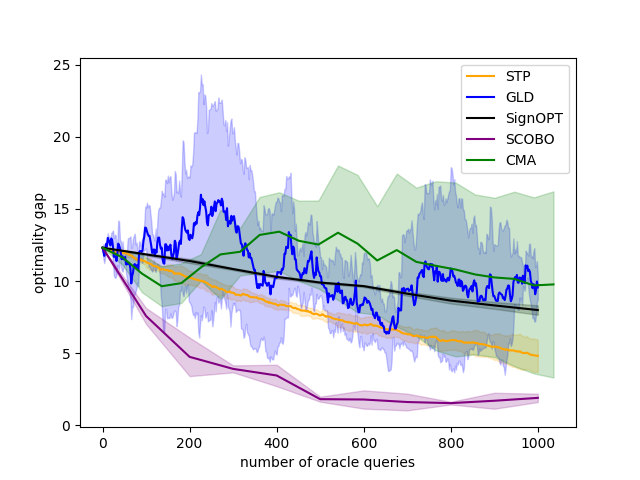}
  \caption{Minimizing the SparseQuadratic function using a noisy comparison oracle. {\bf Left:} $p=0.9$ {\bf Right:} $p=0.7$. }
\end{figure}

\subsection{Hyperparameter Tuning} 
\label{sec:hyperparameter tuning subsection}

To demonstrate how our PyCUTEst utility might be used in practice, we tune the  hyperparameters for two algorithms, {\tt GLD} and {\tt SCOBO}, using three PyCUTEst functions: Rosenbrock (ROSENBR), Hilbert (HILBERTA), and Watson (WATSON). For more information on these functions, see \cite{fowkes2019PyCUTEst}. \\

\begin{figure}
  \centering
  \label{fig:heatmap_GLD}
  \includegraphics[width=0.32\textwidth]{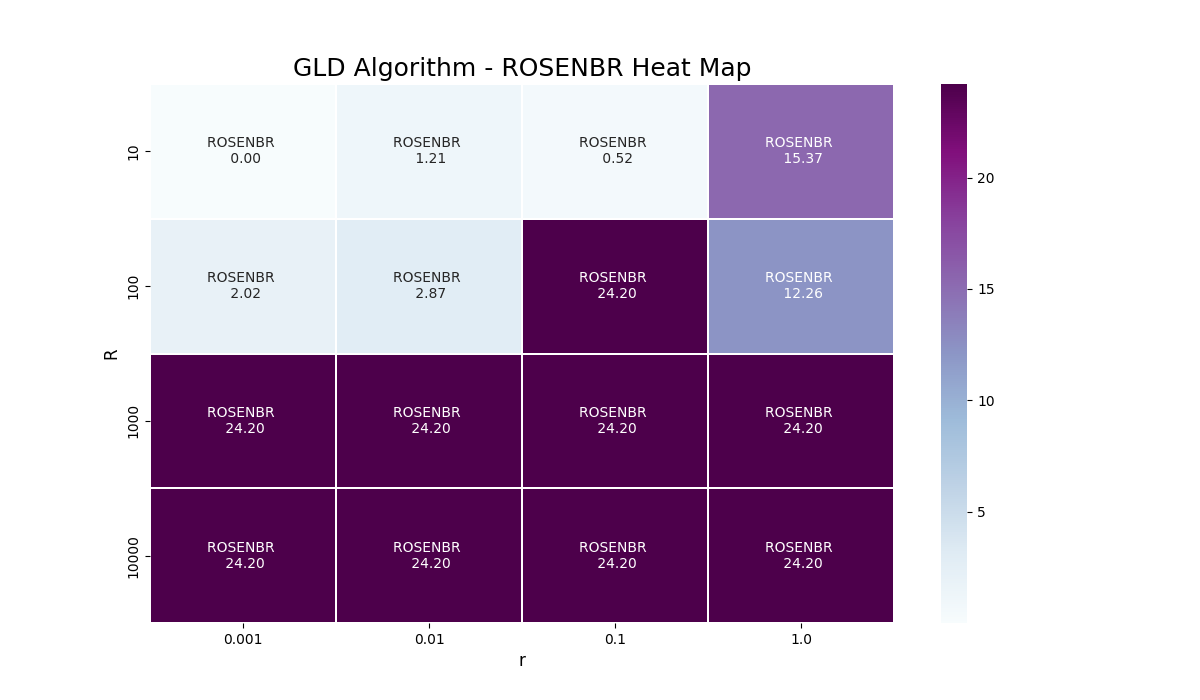}
  \hfill
  \includegraphics[width=0.32\textwidth]{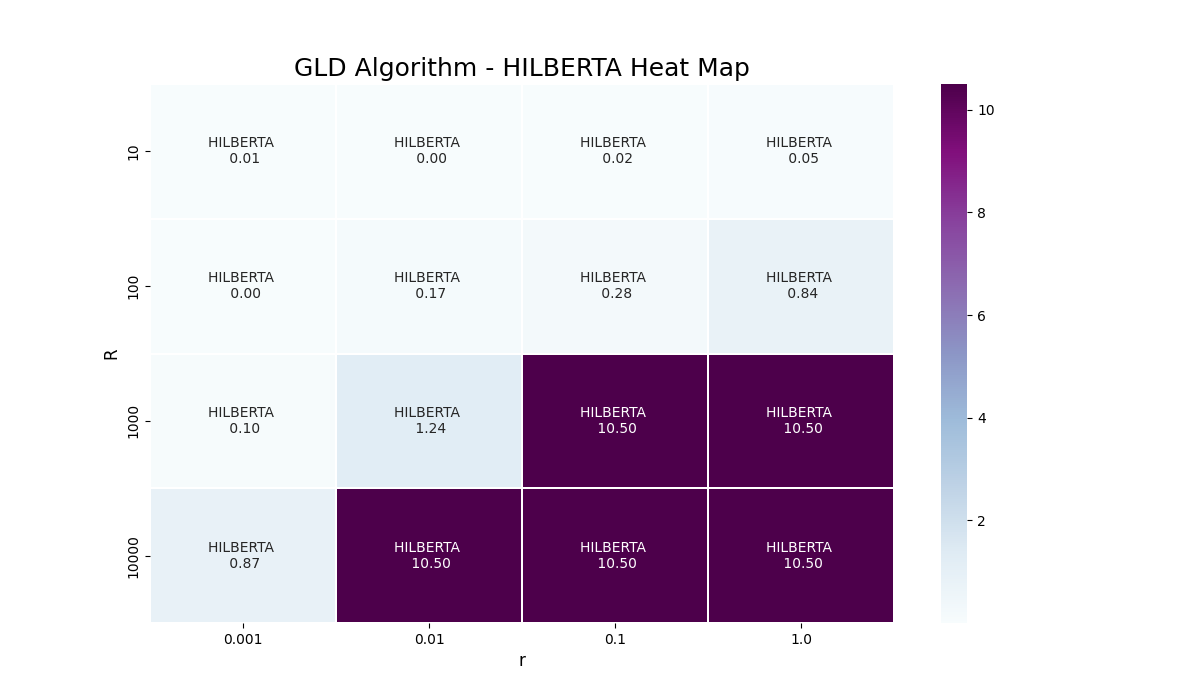}
  \hfill
  \includegraphics[width=0.32\textwidth]{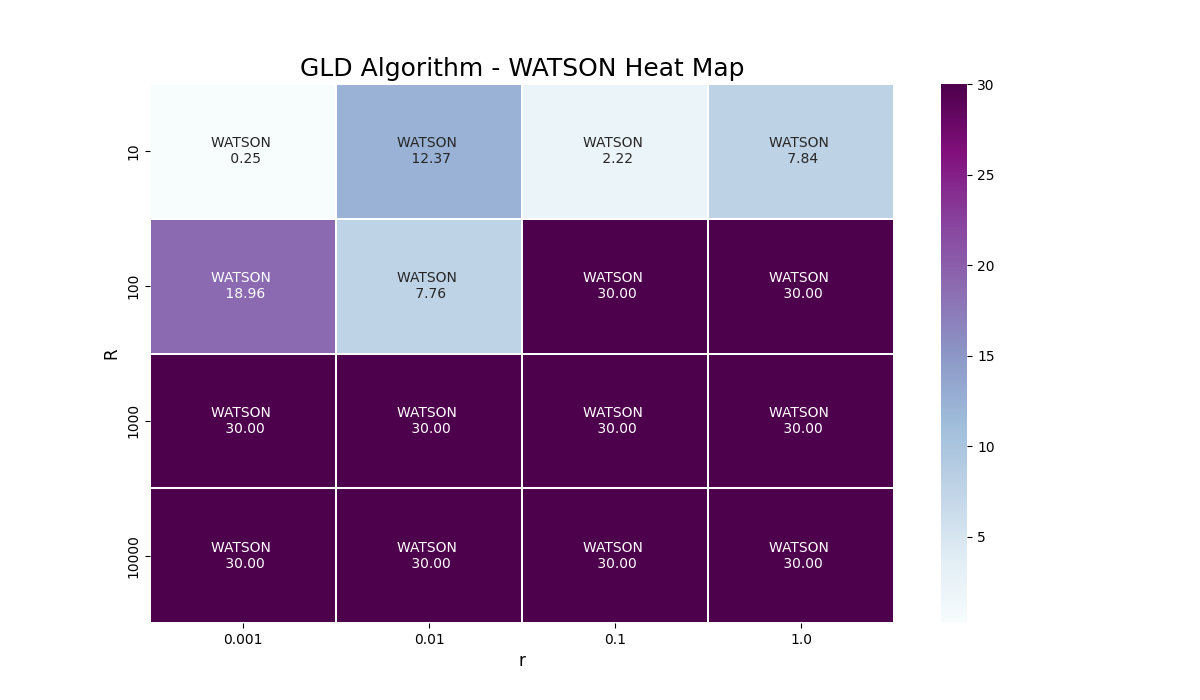}
  \hfill
  \includegraphics[width=0.32\textwidth]{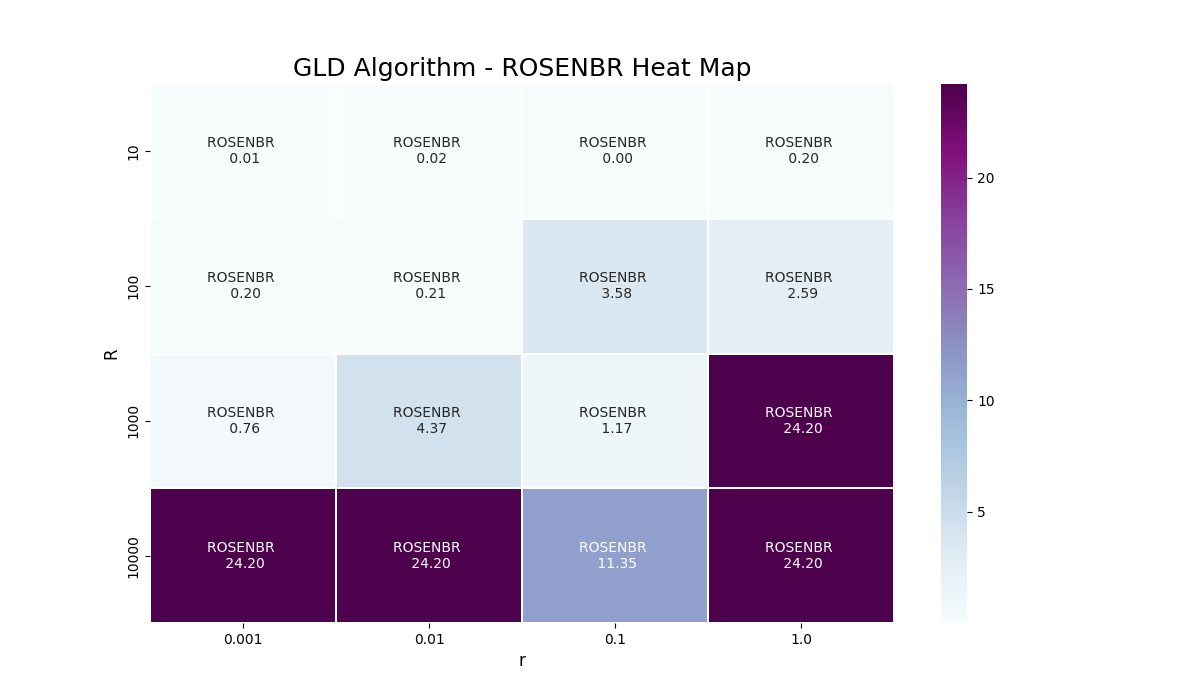}
  \hfill
  \includegraphics[width=0.32\textwidth]{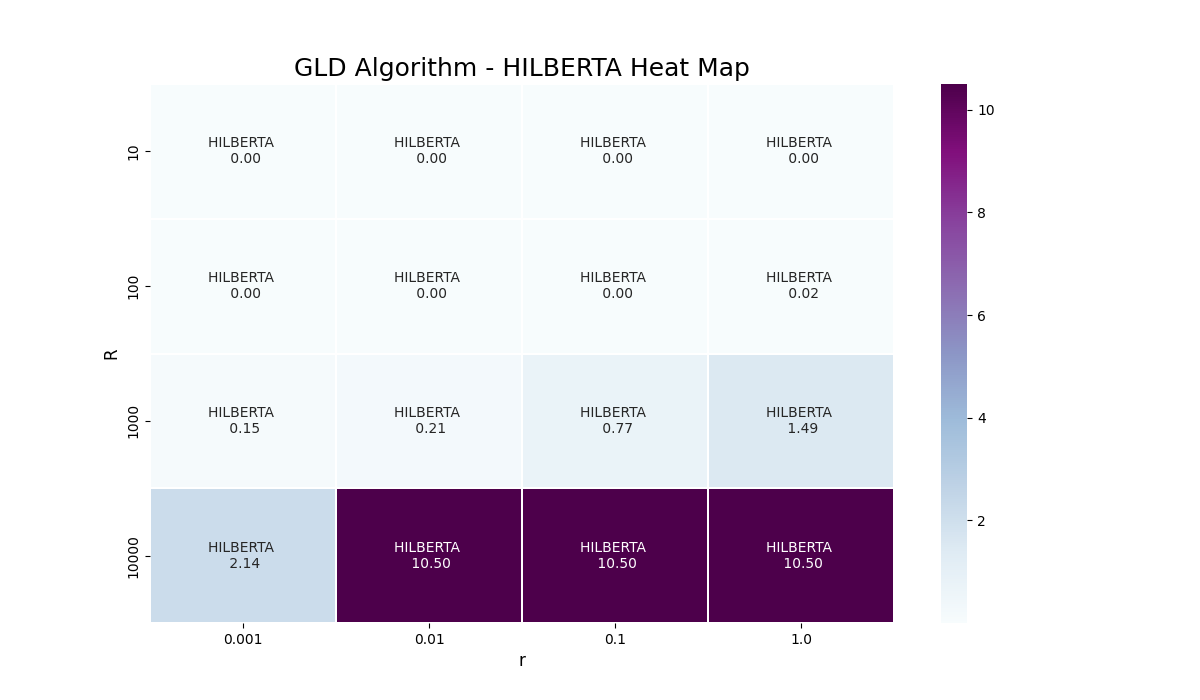}
  \hfill
  \includegraphics[width=0.32\textwidth]{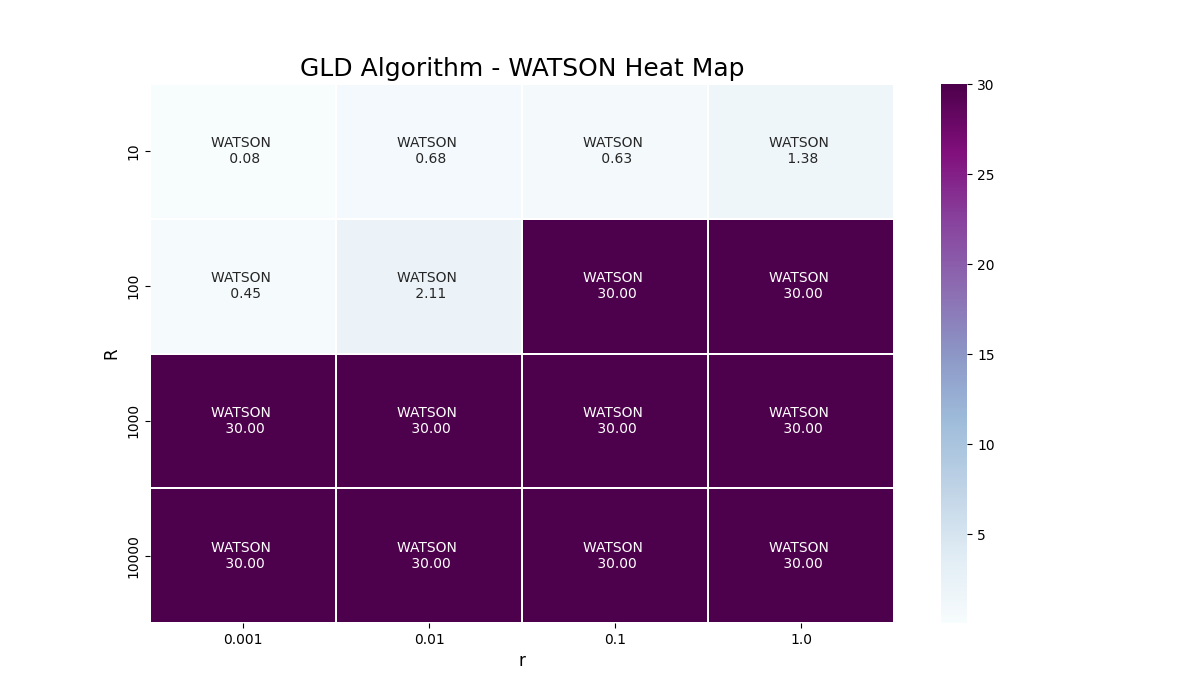}
  \caption{{\bf Top Left:} ROSENBR (100 queries). {\bf Top Center:} HILBERTA (100 queries). {\bf Top Right:} WATSON (100 queries). {\bf Bottom Left:} ROSENBR (5,000 queries). {\bf Bottom Center:} HILBERTA (5,000 queries). {\bf Bottom Right:} WATSON (5,000 queries).}
\end{figure}

{\tt GLD} takes in two scalar parameters $R$ and $r$, representing the upper and lower bounds of the search radii, respectively. \Cref{fig:heatmap_GLD} shows results in the form of heat map visualizations of {\tt GLD}’s performance for 100 and 5000 oracle queries. Darker colors represent higher final function evaluations and lighter colors represent lower final function evaluations. Thus, pairings of $r$, $R$ that are a lighter blue on the heat map represent more optimal parameter value pairings than those that are purple. We considered $r$ values of 0.001, 0.01, 0.1, 1.0, displayed on the $x$-axis, and $R$ values of 10000, 1000, 100, 10, displayed on the $y$-axis. \\

{\tt SCOBO} takes in scalar parameters $r$, $s$, and $m$, see \cite{cai2020one} for a discussion on the meaning of these parameters. Heat maps were generated by varying values of $s$ and $r$, while keeping $m$ fixed, and finding the last function evaluation for each of these pairings. The same was done for $s$ and $m$, keeping $r$ fixed. Results showing the heat maps for $s$ and $r$ variations are displayed in \Cref{fig:Heatmap_SCOBO_sr}, and heat maps for $s$ and $m$ variations are shown in \Cref{fig:Heatmap_SCOBO_sm}, containing graphs for both 1,000 and 10,000 oracle queries. Darker colors represent lower final function evaluations and greener colors represent higher final function evaluations (notice that this is a different color scale to \Cref{fig:heatmap_GLD}). Thus, pairings that are dark are better than those that are light. Values of $r$ range on the $x$-axis (0.001, 0.01, 0.1, 1.0) and values of $s$ range on the $y$-axis (100, 50, 20, 10).

\begin{figure}
  \centering
  \label{fig:Heatmap_SCOBO_sr}
  \includegraphics[width=0.32\textwidth]{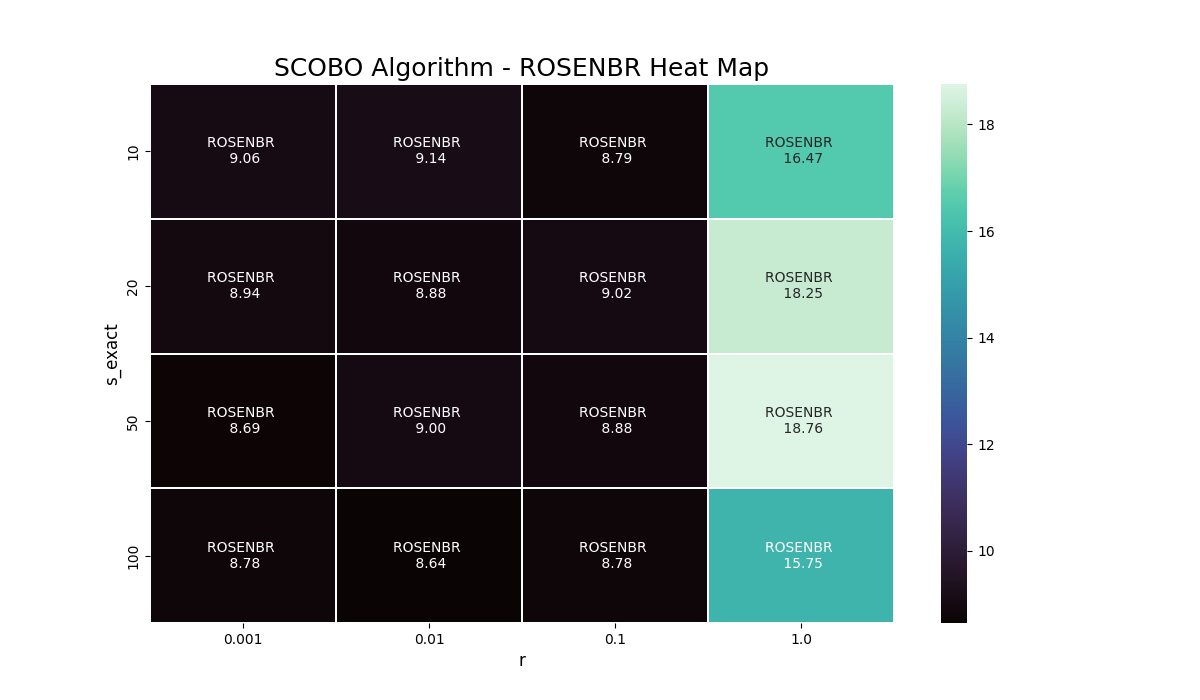}
  \hfill
  \includegraphics[width=0.32\textwidth]{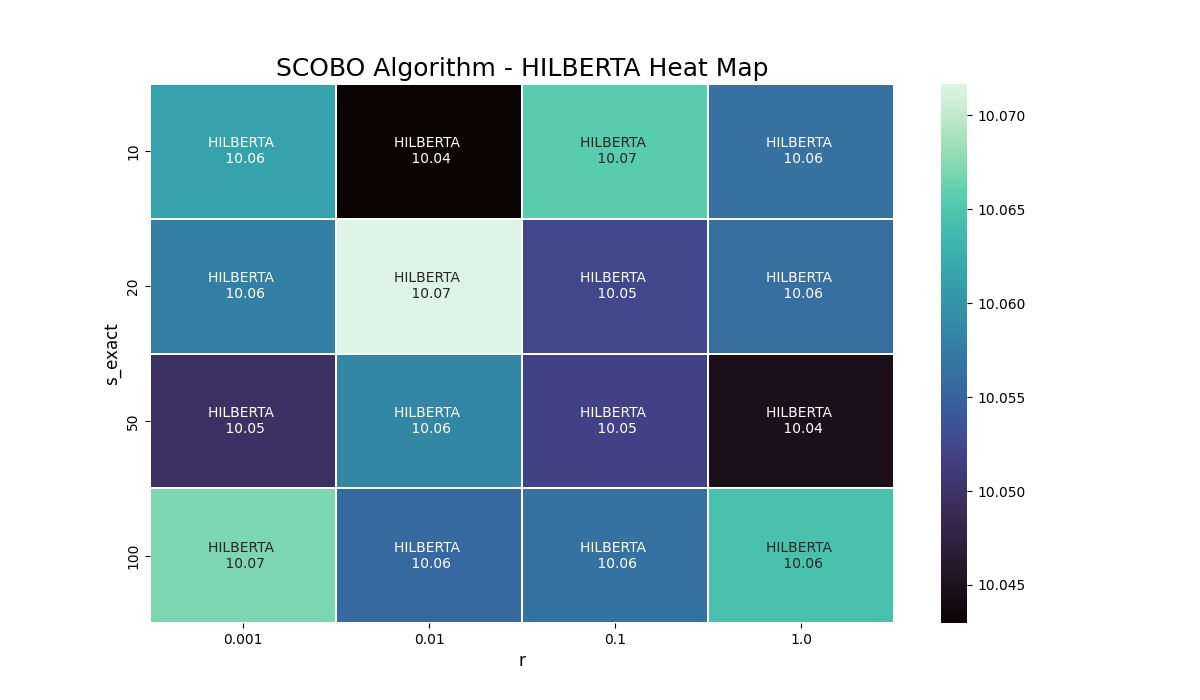}
  \hfill
  \includegraphics[width=0.32\textwidth]{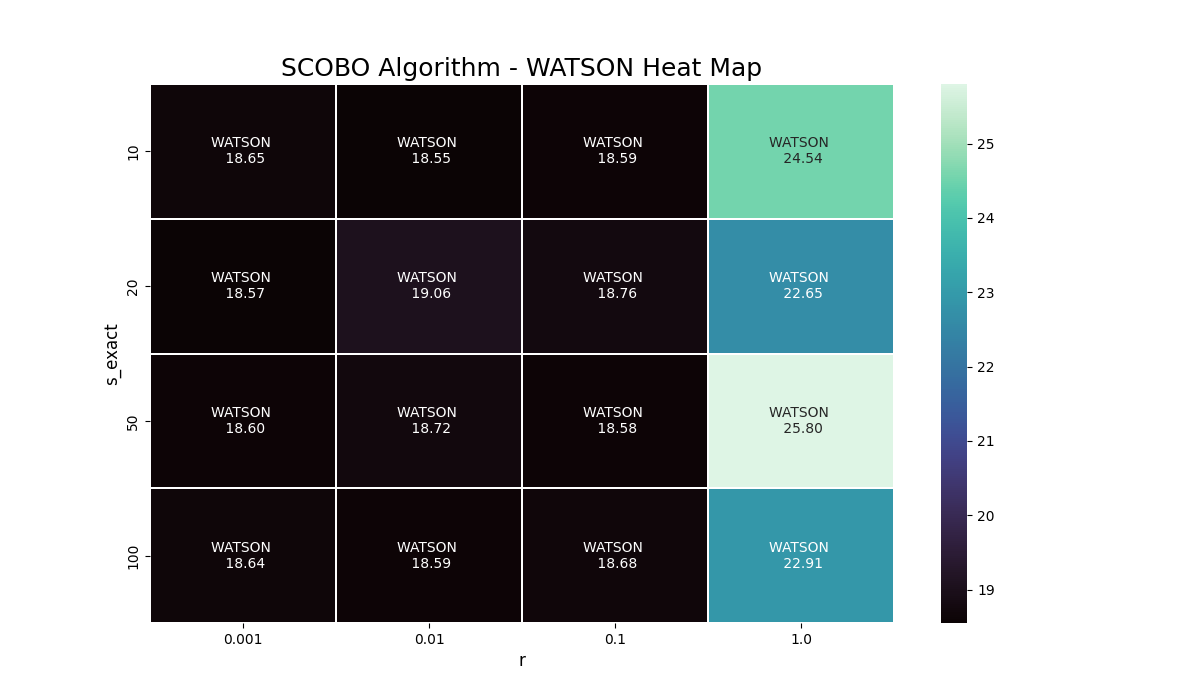}
  \hfill
  \includegraphics[width=0.32\textwidth]{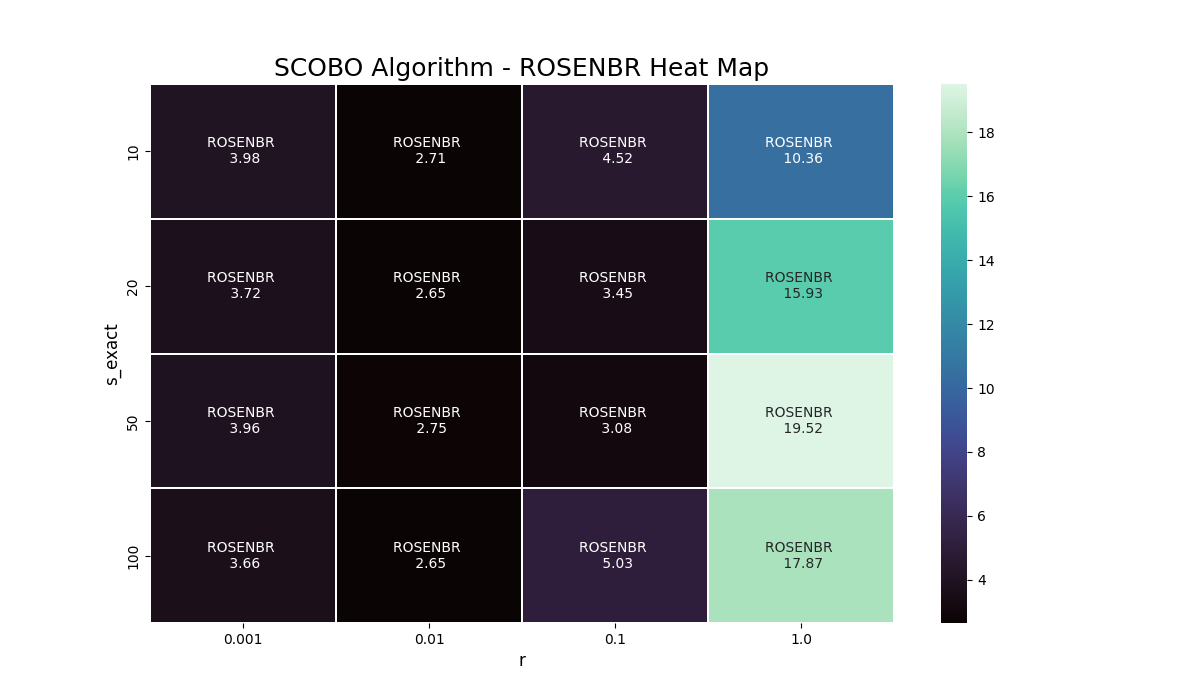}
  \hfill
  \includegraphics[width=0.32\textwidth]{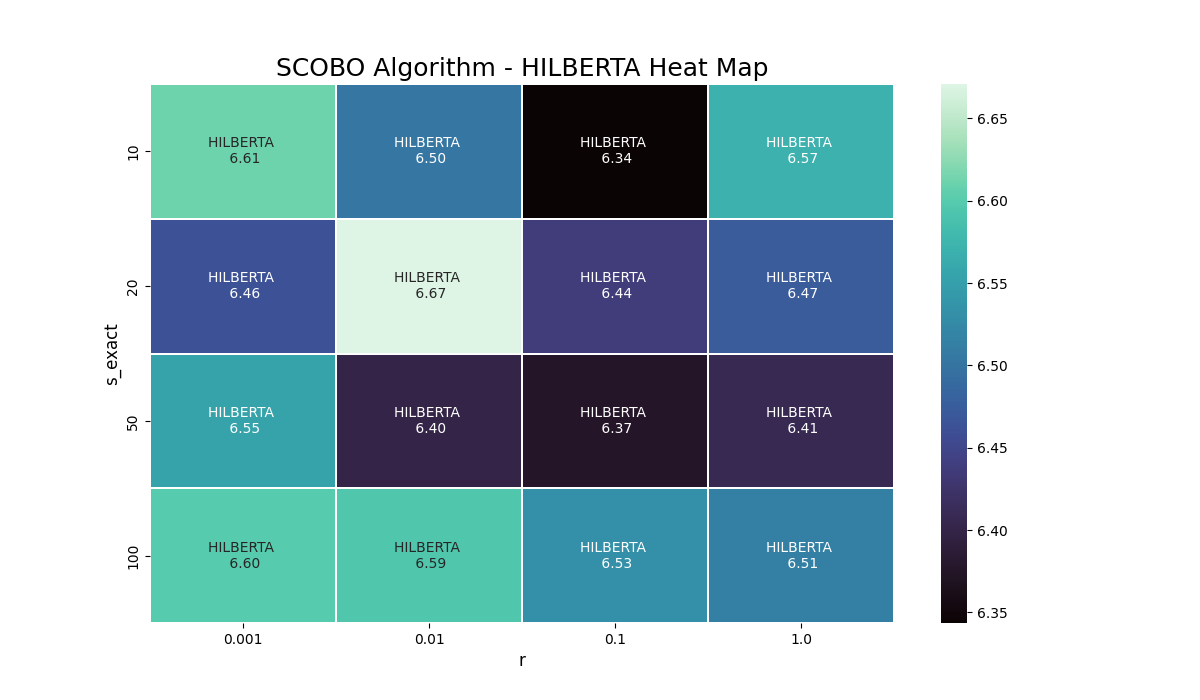}
  \hfill
  \includegraphics[width=0.32\textwidth]{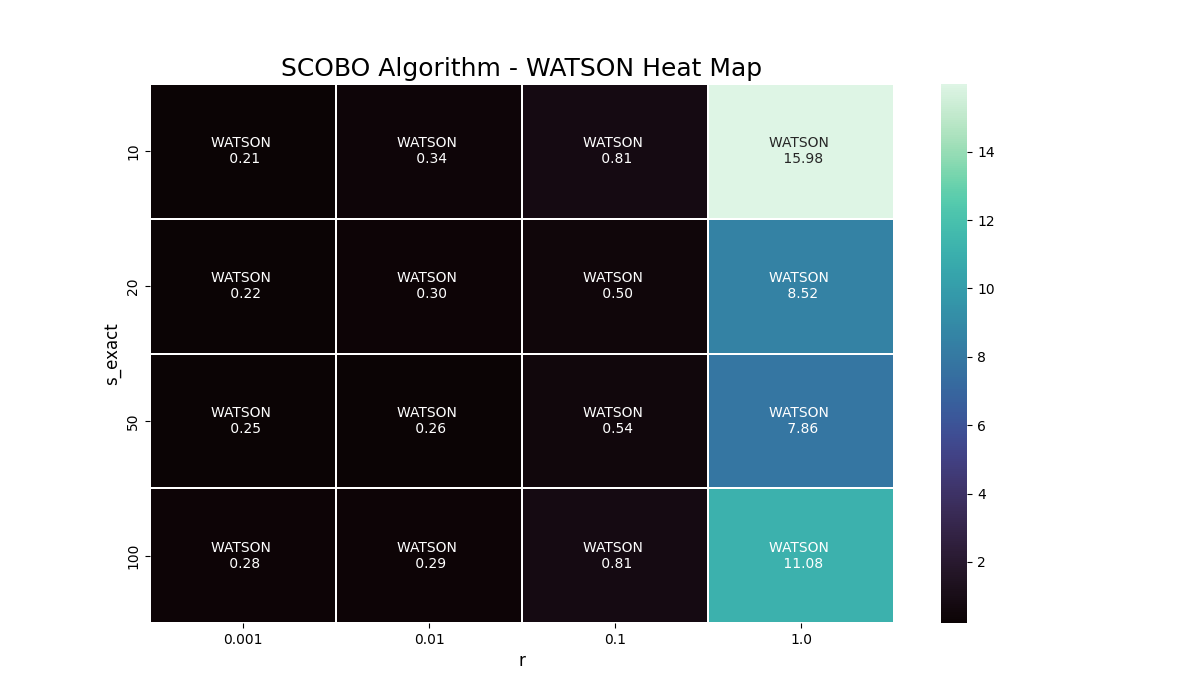}
  \caption{{\bf Top Left:} ROSENBR (1,000 queries). {\bf Top Center:} HILBERTA (1,000 queries). {\bf Top Right:} WATSON (1,000 queries). {\bf Bottom Left:} ROSENBR (10,000 queries). {\bf Bottom Center:} HILBERTA (10,000 queries). {\bf Bottom Right:} WATSON (10,000 queries).}
\end{figure}

\begin{figure}
  \centering
  \label{fig:Heatmap_SCOBO_sm}
  \includegraphics[width=0.32\textwidth]{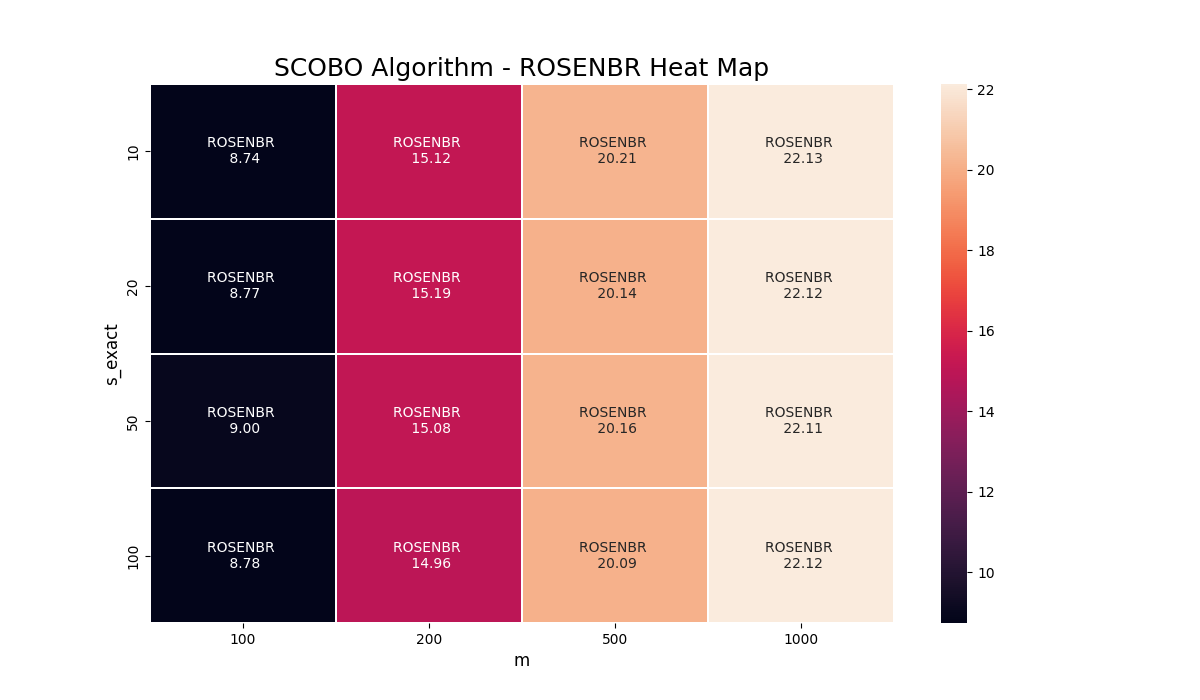}
  \hfill
  \includegraphics[width=0.32\textwidth]{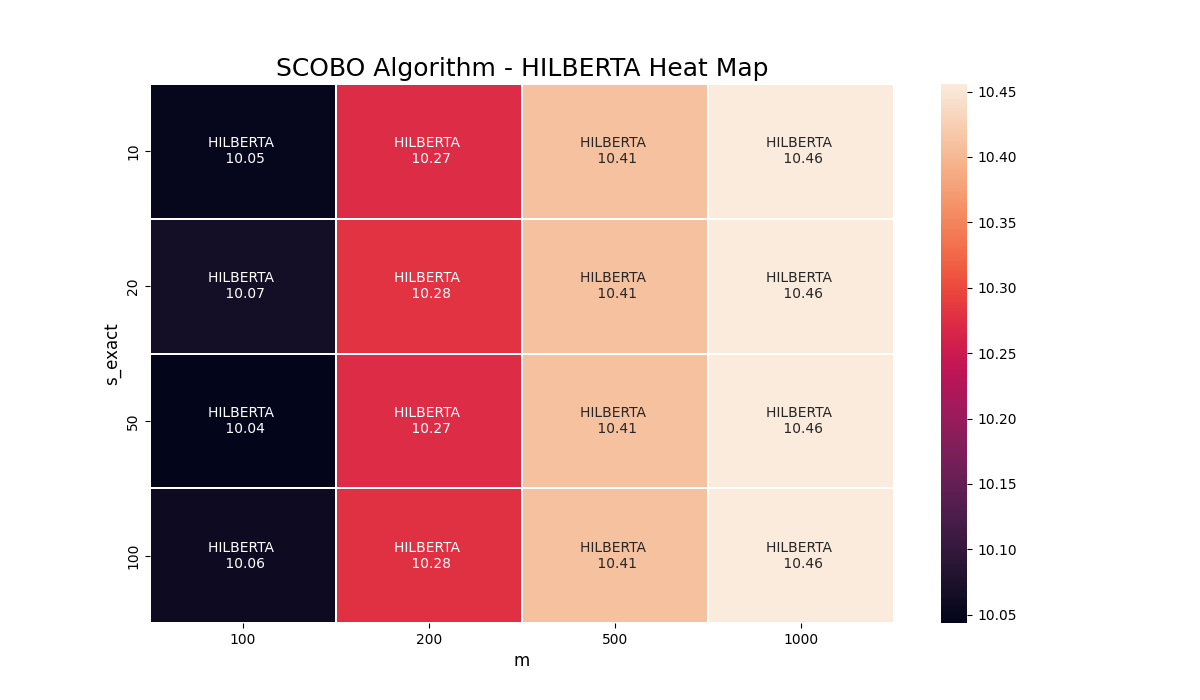}
  \hfill
  \includegraphics[width=0.32\textwidth]{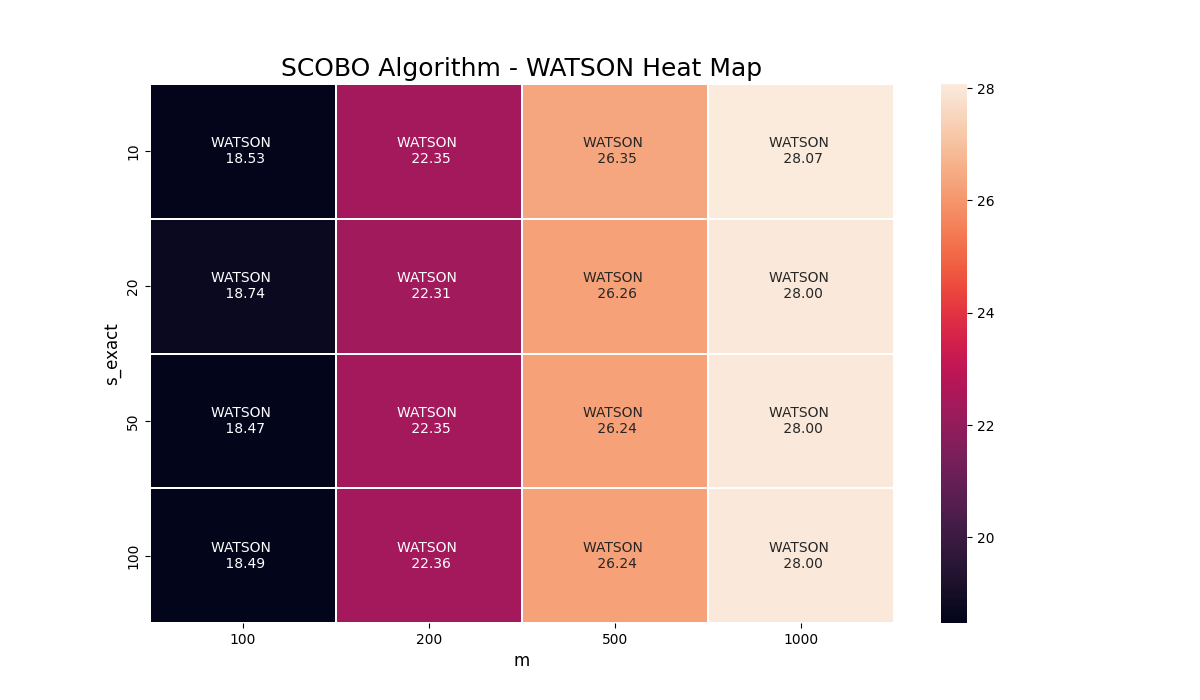}
  \hfill
  \includegraphics[width=0.32\textwidth]{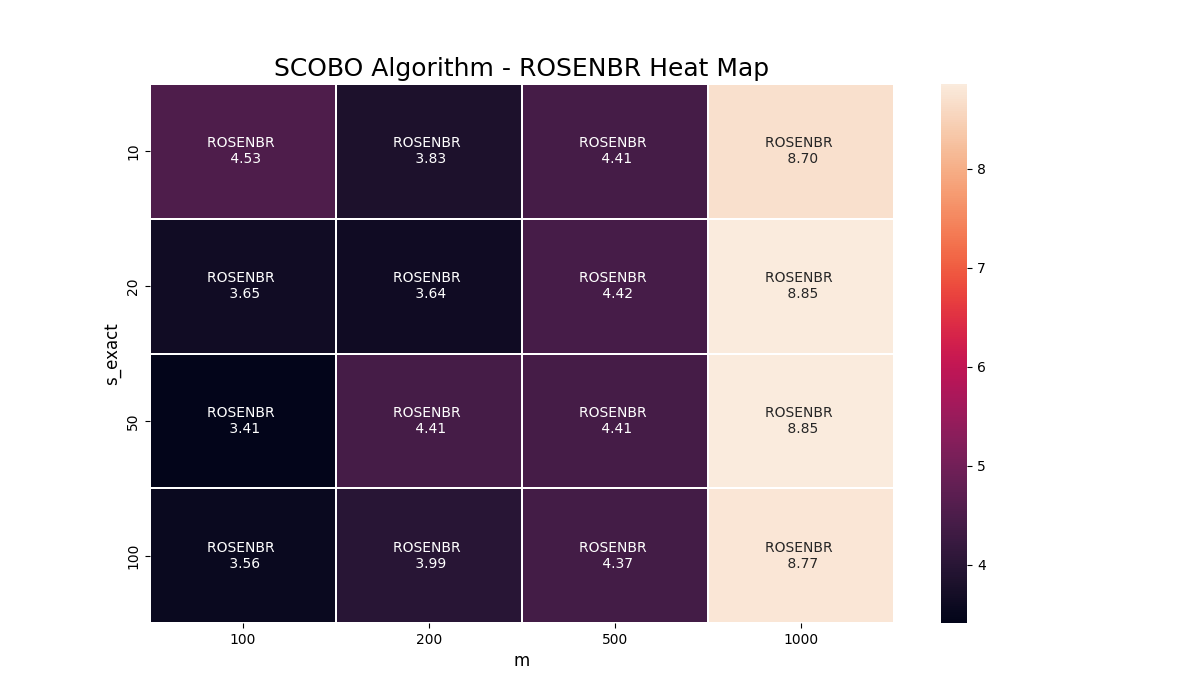}
  \hfill
  \includegraphics[width=0.32\textwidth]{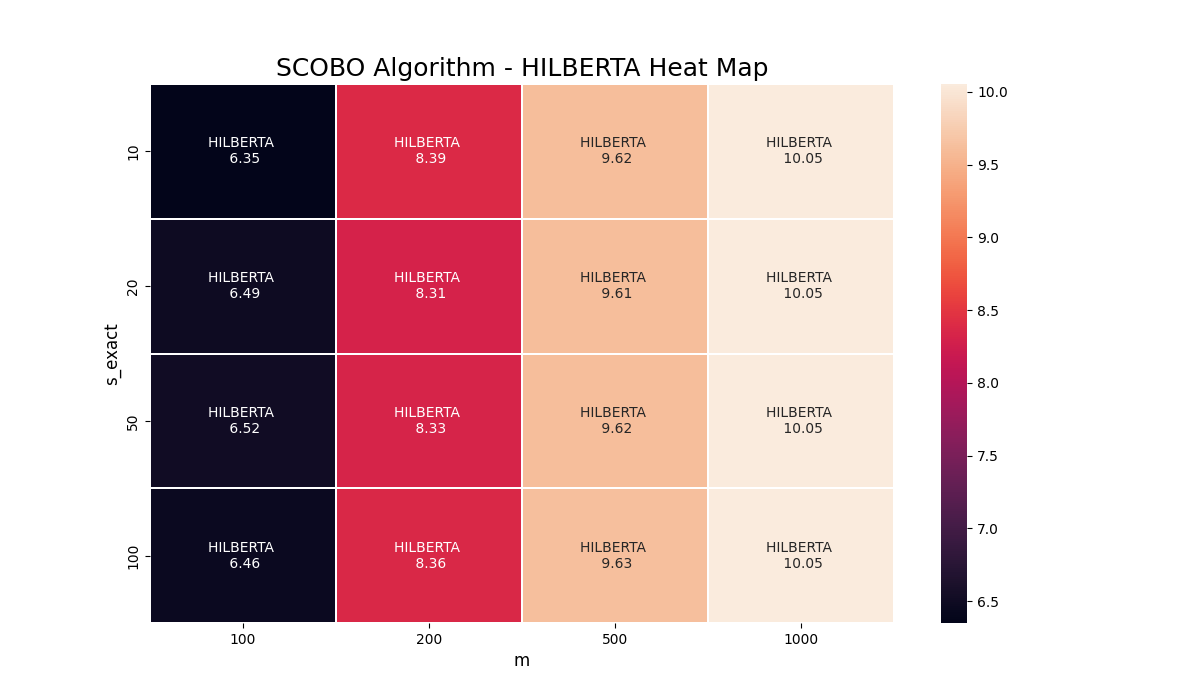}
  \hfill
  \includegraphics[width=0.32\textwidth]{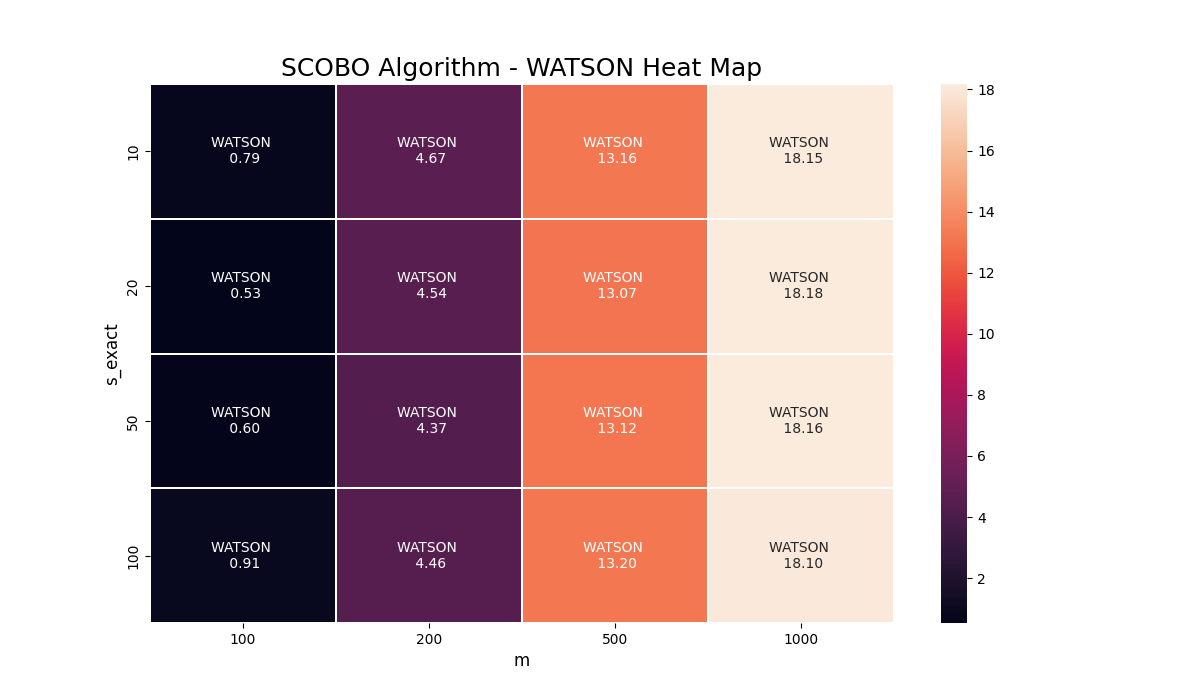}
  \caption{{\bf Top Left:} ROSENBR (1,000 queries). {\bf Top Center:} HILBERTA (1,000 queries). {\bf Top Right:} WATSON (1,000 queries). {\bf Bottom Left:} ROSENBR (10,000 queries). {\bf Bottom Center:} HILBERTA (10,000 queries). {\bf Bottom Right:} WATSON (10,000 queries).}
  \label{fig:heatmap_SCOBO}
\end{figure}

\section{Discussion}
\label{sec:discussion}

There are many insights to gather from the results of our experiments. Firstly, certain CBO algorithms work better than others in various situations. For example, notice that {\tt SignOPT} and {\tt CMA} tend to consistently lack in ability to optimize the SparseQuadratic, MaxK, and NonSparseQuadratic functions whereas {\tt GLD} and {\tt SCOBO} optimize them fast and efficiently (\Cref{fig:ToyProblemBenchmarking}). However in NonSparseQuadratic, a function without sparse gradients (which {\tt SCOBO} specializes in), GLD converges the function values faster. Therefore in this case it may be in a scientist's best interest to use {\tt GLD}. Furthermore we see in \Cref{fig:PyCUTEst_Results} that {\tt SCOBO} is unable to generalize to problems of higher dimensions whereas {\tt GLD} and {\tt STP} optimize the functions the best, relatively. Thus in the situation of non-sparse gradients and high dimensional input spaces, it may be in the scientist's favor to choose {\tt GLD}, {\tt STP}, or {\tt SignOPT} over {\tt SCOBO}. \\


Performance profiles provide information on the fraction of problems for which certain algorithms perform the best and on the robustness of each algorithm. Robustness refers to the ability of an algorithm to eventually solve hard problem instances. From the performance profiles shown in \Cref{fig:Performance_Profiles}, notice that each algorithm solves the same fraction of problems given the query budget. When success criterion is set to be $f(x_k)\leq 0.05f(x_0)$, we see approximately a $55\%$ solve rate given a $10^4$ budget and $65\%$ given a $10^5$ budget; when success criterion is defined as $\|\nabla f(x_k)\|_2 \leq 0.05\|\nabla f(x_0)\|_2$, we observe the same solve rates respective to query budgets apart from {\tt GLD} which is able to solve $80\%$ of problems given a $10^5$ query budget. Thus, while the robustness of each algorithm is relatively equal, this value is still low. However, a low value of robustness is expected as CBO is much harder than ZOO. Notice that {\tt STP} and {\tt SignOPT} have the highest rate of initial increase, indicating 
they are able to solve easier problems the fastest. {\tt SCOBO} clearly struggles the most in optimizing CUTEst functions. {\tt GLD} has a slower rate of initial increase yet eventually levels off at a high rate, indicating that if left with a high-enough query budget it can successfully solve a substantial portion of problems. \\

Focusing on the {\tt GLD} hyperparameter tuning experiments, we can determine that the best values of $R$ and $r$ tested were the smallest combination of each $(r = 0.001, R = 10)$. Observe that the top-left of each graph becomes the lightest in the smallest number of queries for each of the three PyCUTEst problems we tuned against. This corner of the heat map corresponds to the smallest value of each search radii bound (\Cref{fig:heatmap_GLD}). \\

For {\tt SCOBO}, we see interesting and perhaps less interpretable results. We first tuned hyperparameters $s$ and $m$ to find the best pairing. We found that $s$ had no effect on the performance of the pairing, whereas smaller $m$ led to better optimization. Recalling that $m$ denotes the number of queries made per iteration (see \cite{cai2020one}), this reveals an interesting tradeoff. Although higher $m$ means more accurate gradient approximations, it is better to take a smaller $m$ and hence less accurate gradient approximations, as this allows for more (albeit noisier) iterations given a fixed query budget. We next tuned $s$ and $r$ with $m=100$ fixed. Altering values for these parameters seemed to make no difference on performance when tuned against the problem HILBERTA, yet for ROSENBR and WATSON any pairing which had an $r$ value of $0.001$, $0.01$, or $0.1$ minimized the function the best. When $r = 1$, any value of $s$ yielded poor performance. For the ROSENBR problem, when $r = 0.01$ any value of $s$ had a very strong minimization. For the WATSON problem, $r = 0.01$ and $r = 0.001$ were both ideal. This indicates the best optimization occurs with a small $r$ value (\Cref{fig:Heatmap_SCOBO_sr}). \\

Finally, our experiments with a noisy oracle reveal some interesting insights into the robustness of the CBO algorithms considered. When the success rate is $p=0.9$, we hardly see a difference compared to using a normal Comparison Oracle (\Cref{fig:Noisy_Oracle_Results}). However, when the success rate is $p=0.7$ the benchmarked algorithms are noticeably more noisy. {\tt GLD}’s optimization is considerably changed as it is sensitive to error. This can be explained by the pseudocode \Cref{alg:stp original_2} and \Cref{alg:gld original}, as this algorithm takes strides in the direction of the smaller input value. Thus if the wrong value is being outputted by the oracle $30\%$ of the time, the algorithm will step in the wrong direction often and considerably diminish its minimization strength. While {\tt SCOBO}, {\tt SignOPT}, and {\tt STP} also seem to get noisier, they are much less impacted by the error. These three algorithms are more {\em robust} to noise than {\tt GLD} or {\tt CMA}. 

\section{Conclusions}
\label{sec:conclusions}

We have provided a novel utility for converting ZOO algorithms to CBO algorithms, and showcased how we did so with three state-of-the-art algorithms. We described our benchmarking experiments across these three converted and two already-existing CBO algorithms and analyzed results extensively. Users now have access to a suite of CBO algorithms as well as guidance in their application to continuous, large-scale CBO problems. \\ 

There is future work to conduct in this area. One idea is to use human comparison rather than a comparison oracle, so that instead of modeling what a noisy oracle may look like we establish it with human error. This continuation would tie into Cognitive Science, as we would work with human participants. Additionally, we can conduct hyperparameter tuning for more CBO algorithms, as we only covered two ({\tt GLD}, {\tt SCOBO}).

\section*{Acknowledgments}
Most of this work was conducted while the authors were affiliated with the Mathematics Department at UCLA. The authors are grateful for the stimulating environment provided by this department.

\bibliographystyle{siamplain}
\bibliography{references}

\begin{thebibliography}{10}

\bibitem{bergou2020stochastic}
{\sc E.~H. Bergou, E.~Gorbunov, and P.~Richt{\'a}rik}, {\em Stochastic three
  points method for unconstrained smooth minimization}, SIAM Journal on
  Optimization, 30 (2020), pp.~2726--2749.

\bibitem{cai2021zeroth}
{\sc H.~Cai, Y.~Lou, D.~McKenzie, and W.~Yin}, {\em A zeroth-order block
  coordinate descent algorithm for huge-scale black-box optimization}, in
  International Conference on Machine Learning, PMLR, 2021, pp.~1193--1203.

\bibitem{cai2020one}
{\sc H.~Cai, D.~Mckenzie, W.~Yin, and Z.~Zhang}, {\em A one-bit,
  comparison-based gradient estimator}, Applied and Computational Harmonic
  Analysis, 60 (2022), pp.~242--266.

\bibitem{cheng2019query}
{\sc M.~Cheng, T.~Le, P.-Y. Chen, H.~Zhang, J.~Yi, and C.-J. Hsieh}, {\em
  Query-efficient hard-label black-box attack: An optimization-based approach},
  in International Conference on Learning Representation (ICLR), 2019.

\bibitem{christiano2017deep}
{\sc P.~F. Christiano, J.~Leike, T.~Brown, M.~Martic, S.~Legg, and D.~Amodei},
  {\em Deep reinforcement learning from human preferences}, Advances in neural
  information processing systems, 30 (2017).

\bibitem{cormen2022introduction}
{\sc T.~H. Cormen, C.~E. Leiserson, R.~L. Rivest, and C.~Stein}, {\em
  Introduction to algorithms}, MIT press, 2022.

\bibitem{dolan2002benchmarking}
{\sc E.~D. Dolan and J.~J. Mor{\'e}}, {\em Benchmarking optimization software
  with performance profiles}, Mathematical Programming, 91 (2002),
  pp.~201--213.

\bibitem{fowkes2019PyCUTEst}
{\sc J.~Fowkes and L.~Roberts}, {\em Py{CUTE}st: Python interface to the
  {CUTE}st optimization test environment}, 2019.

\bibitem{furnkranz2012preference}
{\sc J.~F{\"u}rnkranz, E.~H{\"u}llermeier, W.~Cheng, and S.-H. Park}, {\em
  Preference-based reinforcement learning: a formal framework and a policy
  iteration algorithm}, Machine learning, 89 (2012), pp.~123--156.

\bibitem{gelly2007comparison}
{\sc S.~Gelly, S.~Ruette, and O.~Teytaud}, {\em Comparison-based algorithms are
  robust and randomized algorithms are anytime}, Evolutionary Computation, 15
  (2007), pp.~411--434.

\bibitem{ghadimi2013stochastic}
{\sc S.~Ghadimi and G.~Lan}, {\em Stochastic first-and zeroth-order methods for
  nonconvex stochastic programming}, SIAM Journal on Optimization, 23 (2013),
  pp.~2341--2368.

\bibitem{golovin2019gradientless}
{\sc D.~Golovin, J.~Karro, G.~Kochanski, C.~Lee, X.~Song, and Q.~Zhang}, {\em
  Gradientless descent: High-dimensional zeroth-order optimization}, in
  International Conference on Learning Representations, 2019.

\bibitem{gould2015cutest}
{\sc N.~I. Gould, D.~Orban, and P.~L. Toint}, {\em {CUTE}st: a constrained and
  unconstrained testing environment with safe threads for mathematical
  optimization}, Computational optimization and applications, 60 (2015),
  pp.~545--557.

\bibitem{hansen2016cma}
{\sc N.~Hansen}, {\em The cma evolution strategy: A tutorial}, arXiv preprint
  arXiv:1604.00772,  (2016).

\bibitem{heaton2022global}
{\sc H.~Heaton, S.~W. Fung, and S.~Osher}, {\em Global solutions to nonconvex
  problems by evolution of hamilton-jacobi pdes}, arXiv preprint
  arXiv:2202.11014,  (2022).

\bibitem{karabag2021smooth}
{\sc M.~O. Karabag, C.~Neary, and U.~Topcu}, {\em Smooth convex optimization
  using sub-zeroth-order oracles}, in Proceedings of the AAAI Conference on
  Artificial Intelligence, vol.~35, 2021, pp.~3815--3822.

\bibitem{kim2021curvature}
{\sc B.~Kim, H.~Cai, D.~McKenzie, and W.~Yin}, {\em Curvature-aware
  derivative-free optimization}, arXiv preprint arXiv:2109.13391,  (2021).

\bibitem{larson2019derivative}
{\sc J.~Larson, M.~Menickelly, and S.~M. Wild}, {\em Derivative-free
  optimization methods}, Acta Numerica, 28 (2019), pp.~287--404.

\bibitem{Nelder:1965}
{\sc J.~A. Nelder and R.~Mead}, {\em A simplex method for function
  minimization}, Computer Journal, 7 (1965), pp.~308--313.

\bibitem{nelder1965simplex}
{\sc J.~A. Nelder and R.~Mead}, {\em A simplex method for function
  minimization}, The computer journal, 7 (1965), pp.~308--313.

\bibitem{nesterov2017random}
{\sc Y.~Nesterov and V.~Spokoiny}, {\em Random gradient-free minimization of
  convex functions}, Foundations of Computational Mathematics, 17 (2017),
  pp.~527--566.

\bibitem{powell2006newuoa}
{\sc M.~J. Powell}, {\em The newuoa software for unconstrained optimization
  without derivatives}, Large-scale nonlinear optimization,  (2006),
  pp.~255--297.

\bibitem{1964Powell}
{\sc M.~J.~D. Powell}, {\em An efficient method for finding the minimum of a
  function of several variables without calculating derivatives}, The Computer
  Journal, 7 (1964), p.~155, \url{https://doi.org/10.1093/comjnl/7.2.155},
  \url{+ http://dx.doi.org/10.1093/comjnl/7.2.155}.

\bibitem{tucker2020preference}
{\sc M.~Tucker, E.~Novoseller, C.~Kann, Y.~Sui, Y.~Yue, J.~W. Burdick, and
  A.~D. Ames}, {\em Preference-based learning for exoskeleton gait
  optimization}, in 2020 IEEE International Conference on Robotics and
  Automation (ICRA), IEEE, 2020, pp.~2351--2357.

\bibitem{wimmer2012generalization}
{\sc G.~E. Wimmer, N.~D. Daw, and D.~Shohamy}, {\em Generalization of value in
  reinforcement learning by humans}, European Journal of Neuroscience, 35
  (2012), pp.~1092--1104.

\bibitem{yue2009interactively}
{\sc Y.~Yue and T.~Joachims}, {\em Interactively optimizing information
  retrieval systems as a dueling bandits problem}, in Proceedings of the 26th
  Annual International Conference on Machine Learning, 2009, pp.~1201--1208.

\end{thebibliography}

\newpage
\appendix
\section{Pseudocode}
\label{sec:main appendix}

This Appendix contains pseudocode for three of the five CBO algorithms ({\em STP, GLD CMA}). The other two algorithms, {\em SCOBO} and {\em SignOPT}, were originally CBO and not altered; thus, we did not feel the need to provide pseudocode for them as they can be found in their original respective papers (see references). The Appendix also contains pseudocode for three test problems ({\em Sparse Quadratic, MaxK, Non-Sparse Quadratic}), and four distributions ({\em Uniform distribution of canonical basis vectors, Gaussian, Uniform under Sphere, Rademacher}).


\begin{algorithm}
\caption{Stochastic Three Point ({\tt STP}). For the original algorithm use 3{\color{purple}a}. For the comparison-based version, use 3{\color{purple}b}.}
\label{alg:stp original_3}
\begin{algorithmic}
\STATE{\textbf{Initialization}}
\STATE{Choose $x_0 \in \mathbb{R}^{n}$, stepsizes $\alpha_k > 0$, probability distribution $\mathcal{D}$ on $\mathbb{R}^{n}$}
\FOR{$k = 0,1,2,....$}
  \STATE{1. Generate a random vector $s_k \sim \mathcal{D}$}
  \STATE{2. Let $x_+ = x_k + \alpha_k s_k$ and $x_- = x_k - \alpha_k s_k$}
  \STATE{3{\color{purple}a.} $x_{k+1} = \argmin\{f(x_-), f(x_+), f(x_k)\}$}
  \STATE{3{\color{purple}b.} $x_{k+1} = {\tt CompMin}(x_{-}, x_{+}, x_k)$}
\ENDFOR
\end{algorithmic}
\end{algorithm}

\begin{algorithm}
\caption{Gradientless Descent with Binary Search ({\tt GLD}). For the original algorithm use {\color{purple}a}. For the comparison-based version, use {\color{purple}b}.}
\label{alg:gld original}
\begin{algorithmic}
\STATE{\textbf{Initialization}}
\STATE{Take in function $f \colon \mathbb{R}^{n} \to \mathbb{R}$, $\mathcal{T} \in \mathbb{Z_+} \colon$ number of iterations, $x_0 \colon$ starting point, $\mathcal{D} \colon$ sampling distribution, $\mathcal{R} \colon$ maximum search radius, $r \colon$ minimum search radius}
\STATE{$\mathcal{K} = log(\mathcal{R}/r)$}
\FOR{$t = 0,...,\mathcal{T}$}
  \STATE{\textbf{Ball Sampling Trial:}}
  \FOR{$k = 0,...,\mathcal{K}$}
    \STATE{Set $r_k = 2^{-k}\mathcal{R}$}
    \STATE{Sample $v_k \sim r_{k}\mathcal{D}$}
  \ENDFOR
  \STATE{{\color{purple}a.} \textbf{Update:} $x_{t+1} = \argmin_{k}\{f(y) \vert y = x_{t}, y = x_{t} + v_{k}\}$}
  \STATE{{\color{purple}b.} \textbf{Update:} $x_{t+1} = {\tt CompMin}(x_{t}, x_{t} + v_{k})$}
\ENDFOR
\RETURN $x_{t}$
\end{algorithmic}
\end{algorithm}

\begin{algorithm}
\caption{Covariance Matrix Adaptation - Evolution Strategy ({\tt CMA-ES}). For the original algorithm use {\color{purple}a}. For the comparison-based version, use {\color{purple}b}.}
\label{alg:cma}
\begin{algorithmic}
\STATE{\textbf{Set parameters}}
\STATE{Set parameters $\lambda$, $w_{i=1\cdots \lambda}$, $c_{\sigma}$, $d_{\sigma}$, $c_c$, $c_1$, and $c_{\mu}$}
\STATE{\textbf{Initialization}}
\STATE{Set evolution paths $p_{\sigma} = 0$, $p_{c} = 0$, covariance matrix $\mathcal{C} = I$, and $g = 0$}
\FOR{$k=1,\ldots, \mathcal{K}$}
\STATE{Sample new population of search points, for $k = 1,\cdots,\lambda$}
\begin{eqnarray}
    z_{k} \sim \mathcal{N}(0,\mathcal{I}) \\
    y_{k} = \mathcal{B}\mathcal{D}z_{k} \sim \mathcal{N}(0,\mathcal{C}) \\
    x_{k} = m + \sigma y_{k} \sim \mathcal{N}(m,\sigma^{2}\mathcal{C})
\end{eqnarray}
\STATE{{\color{purple}a.} Sort:} Find permutation $\pi$ such that $f(x_{\pi(1)}) \leq f(x_{\pi(2)}) \leq \ldots \leq f(x_{\pi(\lambda)})$
\STATE{{\color{purple}b.} Sort:} $[x_{\pi(1)},x_{\pi(2)}, \ldots, x_{\pi(\lambda)}] = \text{\tt CompSort}(x_1,\ldots, x_n)$
\STATE{Recombination}
\begin{eqnarray}
    \langle y \rangle_{w} = \sum_{i=1}^{\mu} w_{i} y_{\pi(i)} \\ 
    m \leftarrow m + c_m \sigma \langle y \rangle_{w} 
\end{eqnarray}
\STATE{Step - size control}
\begin{eqnarray}
    p_{\sigma} \leftarrow (1-c_{\sigma})p_{\sigma} + \sqrt{c_{\sigma}(2-c_{\sigma})\mu_{eff}}\mathcal{C}^{-1/2}\langle y \rangle_{w} \\
    \sigma \leftarrow \sigma \times exp(\frac{c_{\sigma}}{d_{\sigma}}(\frac{||p_{\sigma}||}{\mathcal{E}||\mathcal{N}(0,\mathcal{I})||}-1))
\end{eqnarray}
\STATE{Covariance matrix adaptation}
\begin{eqnarray}
    p_{c} \leftarrow (1-c_c)p_c + h_{\sigma} \sqrt{c_{c}(2-c_{c})\mu_{eff}} \langle y \rangle_w \\ 
    w_{i}^{\circ} \leftarrow w_i \times (1\textrm{ if } w_i \geq 0 \textrm{ else } n/||C^{-\frac{1}{2}}y_{i \colon\lambda} ||^{2}) \\
    \mathcal{C} \leftarrow (1+c_1\delta (h_{\sigma})-c_1-c_{\mu}\sum w_j) \ \mathcal{C} + c_1 p_c p_c^{T} + c_{\mu} \sum_{i=1}^{\lambda} w_{i}^{\circ} y_{i \colon \lambda} y^{T}_{i \colon \lambda}
\end{eqnarray}
\ENDFOR
\end{algorithmic}
\end{algorithm}


\begin{algorithm}
\caption{Random Sampling Directions. \newline Based on type of probability distribution. {\color{purple}a}: Original (uniform distribution of canonical basis vectors). {\color{purple}b}: Gaussian distribution. {\color{purple}c}: Uniform under Sphere. {\color{purple}d}: Rademacher distribution. }
\label{alg:random sampling}
\begin{algorithmic}
\STATE{\textbf{Initialization}}

\STATE{Take in the following inputs: $x=$ number of direction vectors ($\emph{type=int}$), $y=$ length of each direction vector ($\emph{type=int}$), $z=$ the type of distribution ($\emph{original, gaussian, uniform under sphere, or rademacher}$)}
\IF{{\color{purple}a.} $z$ = "Original (uniform distribution of canonical basis vectors)"}
    \IF{$x = 1$}
        \STATE{randDirection $= w \in (0, y-1)$}
        \STATE{$s_k = [0, 0, ...., 0]$ where $len(s_k)=y$}
        \STATE{$s_k[$randDirection$]=1$}
    \ELSIF{$x>1$}
        \STATE{directionVectors $= [$ $]$}
        \FOR{$i$ from $0, \dots, x-1$}
            \STATE{randDirection $= w \in (0, y-1)$}
            \STATE{$s_k = [0, 0, \dots, 0]$ where $len(s_k)=y$}
            \STATE{$s_k[$randDirection$]=1$}
            \STATE{directionVectors.append($s_k$)}
        \ENDFOR
        \STATE{$s_k \in \mathbb{R}^{x*y}$ = [ [$v_1$], [$v_2$], \dots, [$v_x$] ] for $v_i \in$ directionVectors, $i \in \{1 \dots x\}$, $v_i \in \mathbb{R}^{y}$}
    \ENDIF
\ENDIF
\IF{{\color{purple}b.} $z$ = "Gaussian Distribution"}
    \IF{$ x = 1$}
        \STATE{$s_k = [d_1, d_2, ..., d_y]$ where $\{d_i\} \in$ standard normal distribution; $i \in \{1, \dots ,y\}$}
    \ELSIF{$x > 1$}
        \STATE{$s_k \in \mathbb{R}^{x*y}$ = [ [$v_1$], [$v_2$], \dots, [$v_x$] ] for $v_i \in$ standard normal distribution $\subset \mathbb{R}^{y}$, $i \in \{1 \dots x\}$}
    \ENDIF
\ENDIF
\STATE{\emph{Continued on next page....}}
\end{algorithmic}
\end{algorithm}

\begin{algorithm}
\caption{Random Sampling Directions \textbf{Cont'd.}}
\label{alg:random sampling 2}
\begin{algorithmic}
\STATE{\dots .}
\IF{{\color{purple}c.} $z$ = "Uniform Under Sphere"}
    \IF{$x = 1$}
        \STATE{$s_k = [d_1, d_2, ..., d_y]$ where $\{d_i\} \in$ standard normal distribution; $i \in \{1, \dots ,y\}$}
        \STATE{norm $=$ Frobenius norm of $s_k$}
        \STATE{$s_k = s_k$ $/$ norm}
    \ELSIF{$x > 1$}
        \STATE{directionVectors $= [$ $]$}
        \FOR{$i$ from $0, \dots, x-1$}
            \STATE{$s_k = [d_1, d_2, ..., d_y]$ where $\{d_i\} \in$ standard normal distribution; $i \in \{1, \dots ,y\}$}
            \STATE{norm $=$ Frobenius norm of $s_k$}
            \STATE{$s_k = s_k$ $/$ norm}
            \STATE{directionVectors.append($s_k$)}
        \ENDFOR
        \STATE{$s_k \in \mathbb{R}^{x*y}$ = [ [$v_1$], [$v_2$], \dots, [$v_x$] ] for $v_i \in$ directionVectors, $i \in \{1 \dots x\}$, $v_i \in \mathbb{R}^{y}$}
    \ENDIF
\ENDIF
\IF{{\color{purple}d.} $z$ = "Rademacher Distribution"}
    \IF{$x = 1$}
        \STATE{$s_k = 2$ $*$ round$([d_1, d_2, ..., d_y]) + 1$ where $\{d_i\} \in$ uniform distribution over [0, 1); $i \in \{1, \dots ,y\}$}
        \STATE{$s_k = s_k / \sqrt{y}$}
    \ELSIF{$x > 1$}
        \STATE{directionVectors $= [$ $]$}
        \FOR{$i$ from $0, \dots, x-1$}
            \STATE{$s_k = 2$ $*$ round$([d_1, d_2, ..., d_y]) + 1$ where $\{d_i\} \in$ uniform distribution over [0, 1); $i \in \{1, \dots ,y\}$}
            \STATE{$s_k = s_k / \sqrt{y}$}
            \STATE{directionVectors.append($s_k$)}
        \ENDFOR
        \STATE{$s_k \in \mathbb{R}^{x*y}$ = [ [$v_1$], [$v_2$], \dots, [$v_x$] ] for $v_i \in$ directionVectors, $i \in \{1 \dots x\}$, $v_i \in \mathbb{R}^{y}$}
    \ENDIF
\ENDIF
\RETURN{$s_k$}
\end{algorithmic}
\end{algorithm}

\end{document}